\newcommand{\pluseq}{\mathrel{+}=}
\newcommand{\asteq}{\mathrel{*}=}
\DeclareMathOperator{\sgn}{sign}
\DeclareMathOperator{\diag}{diag}
\DeclareMathOperator{\Vol}{Vol}
\newcommand{\new}[1]{#1} 
\newcommand{\zeps}[3]{\new{Z_{#1} (#2, #3)}}
\newcommand{\zepsr}[3]{\new{Z_{#1}^{\mathrm{reg}}(#2, #3)}}
\renewcommand{\d}{\,\mathrm{d}}
\newcommand{\sums}{\sideset{}{'} \sum}
\renewcommand{\Re}[1]{\mathrm{Re}(#1)}
\renewcommand{\Im}[1]{\mathrm{Im}(#1)}
\newcommand{\crandallreqs}{
Let \(\Lambda\) be a \(d\)-dimensional lattice,
let \(\bm x,\bm y\in\mathds{R}^d\), and let \(\nu\in\mathds C\)
so that \(\nu\neq d\) if \(\bm y\in\Lambda^{\ast}\). }
\newcounter{mythm}
\numberwithin{mythm}{section}
\declaretheorem[style=plain, sibling=mythm]{theorem}
\declaretheorem[style=plain, sibling=mythm]{lemma}
\declaretheorem[style=plain, sibling=mythm]{corollary}
\declaretheorem[style=definition, sibling=mythm]{definition}
\declaretheorem[style=remark, sibling=mythm]{remark}
\newcommand{\algskip}{\vspace*{1.2ex}}
\begin{document}

\title[Computation and properties of the Epstein zeta function]{Computation and Properties of the Epstein Zeta Function with Applications to Quantum Systems}

\author[Buchheit]{Andreas A. Buchheit\textsuperscript{1,2}\orcidlink{0000-0003-4004-713X}}
\author[Busse]{Jonathan K. Busse\textsuperscript{1,3}\orcidlink{0009-0001-3323-3455}}
\author[Gutendorf]{Ruben Gutendorf\textsuperscript{1}\orcidlink{0009-0005-0253-5433}}

\thanks{\textsuperscript{1}Department of Mathematics, Saarland University, Campus E1.1, Saarbrücken, 66123, Saarland, Germany.}
\thanks{\textsuperscript{2}Department of Mathematics, ETH Zürich, Rämistrasse 101, Zürich, 8092, Switzerland.}
\thanks{\textsuperscript{3}Institute of Software Technology, High-Performance Computing Department, German Aerospace Center (DLR), Linder Höhe, Cologne, 51147, North Rhine-Westphalia, Germany.}

\begin{abstract}
The Epstein zeta function generalises the classical Riemann zeta function to oscillatory lattice sums in higher dimensions and has recently emerged as a key tool in the simulation of long-range interacting classical and quantum many-body systems. Its computation and analytic properties are therefore of significant interest, yet a rigorous and comprehensive treatment has been lacking.
We address this gap by introducing a superexponentially convergent algorithm, complete with error bounds, for computing the Epstein zeta function in any dimension with arbitrary real parameters. Our approach is accompanied by a detailed analysis of the analytic properties of the Epstein zeta function. \new{We first present a concise reformulation of its meromorphic continuation, functional equation, and symmetries. We then establish, for the first time, its joint holomorphic continuation in all parameters and offer a complete characterization of the resulting complex singularity structure, which governs convergence rates in numerical algorithms based on the function.} Recognizing that the function can be decomposed into \new{power-law singularities and a regularised analytic part, we provide an algorithm for removing singularities without cancellation error}. This facilitates the evaluation of integrals involving the Epstein zeta function and enables fast precomputations through interpolation methods. \new{It also enables the robust treatment of general Wood-type anomalies in wave scattering problems while avoiding catastrophic cancellation. Finally, it serves as the foundation for a new algorithm for efficiently computing magnetic interactions between arrays of solid bodies.} We present the first high-performance implementation for arbitrary real arguments in EpsteinLib, a C library with Python \new{and Julia} bindings, and rigorously benchmark its performance and accuracy, achieving full-precision evaluation against known analytic results \new{in dimensions 1,2,3,4,6, and 8} and \new{against an arbitrary precision implementation} across the entire parameter range. Finally, we apply our methods to the computation of quantum dispersion relations in \new{three-dimensional} spin systems and Casimir energies in \new{three-dimensional} geometries.
\end{abstract}

\maketitle

\section{Introduction}
Introduced by Paul Epstein in 1903 \cite{epstein1903theorieI,epstein1903theorieII}, the Epstein zeta function generalises the Riemann zeta function to oscillatory lattice sums in higher dimensions while still obeying a functional equation.
For a lattice 
\(\Lambda=A\mathds Z^d\), 
\(A\in\mathds R^{d\times d}\) regular, and
\(\bm{x},\bm{y}\in\mathds{R}^d\),
the Epstein zeta function
is defined as the meromorphic continuation of
\[\zeps{\Lambda, \nu}{\bm x}{\bm y}
	=\sums_{\bm{z}\in\Lambda} \frac{e^{-2\pi i \bm{y}\cdot \bm{z}}}{|\bm{z}-\bm{x}|^{\nu}}
	,\qquad\Re\nu > d,
	\]
 to \(\nu\in\mathds{C}\), where the primed sum excludes the summand \(\bm{z}=\bm{x}\). 
 In private notes, the essential properties of the Epstein zeta functions have already been studied by Hurwitz in 1889 \cite{oswald2016aspects}.
The Epstein zeta function has wide-ranging applications in pure and applied mathematics.
It generalises several special functions, such as the Dirichlet zeta and eta functions, as well as the Lerch zeta function. The non-trivial zeros of the Epstein zeta function have been extensively studied \cite{stark1967zeros,ki2005all}. The Epstein zeta function has recently been used to derive rigorous error bounds in boundary integral equations \cite{wu2021corrected}. It forms a key ingredient of the Singular Euler--Maclaurin expansion (SEM), a recent generalisation of the Euler--Maclaurin summation formula to singular functions in multi-dimensional lattices \cite{buchheit2022efficient,buchheit2022singular}. 

The Epstein zeta function offers numerous applications in theoretical quantum physics and chemistry of technological relevance, especially in systems with long-range interactions. It describes lattice sums, such as Madelung constants, arising in theoretical chemistry \cite{schwerdtfeger100YearsLennardJones2024,emersleben1923zetafunktionenI,emersleben1923zetafunktionenII}. It has been used in the prediction of new phases in unconventional superconductors with long-range interactions \cite{buchheit2023exact}.
We have recently shown that integrals over products of the Epstein zeta function
allow for the evaluation of high-dimensional many-body lattice sums
\cite[Theorem 2.6]{buchheitEpsteinZetaMethod2025}.
The resulting method forms the foundation for a rigorous investigation of the influence of many-body interactions on the stability of matter in theoretical chemistry \cite{robles-navarroExactLatticeSummations2025}.
The Epstein zeta function appears in high-energy physics and quantum field theory in the computation of operator traces relevant in the calculation of Casimir forces
\cite{ambjornPropertiesVacuumMechanical1983}, and in the zeta regularisation of divergent path integrals \cite{hawking1977zeta}. An efficiently computable generalisation of the Epstein zeta function to lattices with boundaries has recently been studied by one of the authors with direct applications to spin systems \cite{buchheitComputationLatticeSums2024}.

The numerical evaluation of the Epstein zeta function and its meromorphic continuation has been the topic of numerous studies. The first meromorphic continuation was constructed by Epstein using theta functions \cite{epstein1903theorieI}. A major milestone in the computation of the Epstein zeta function was reached in the Chowla--Selberg formula \cite{chowla1949epstein}, which allowed for the rapid evaluation of a particular Epstein zeta function in two dimensions. Terras derived an expansion of higher-dimensional zeta functions in terms of zeta functions of lower dimension \cite{terras}. Shanks provided a rapidly convergent representation of Epstein zeta functions for integer arguments \cite{shanksCalculationApplicationsEpstein1975}. A generalisation of the Chowla--Selberg formula to higher dimensions was provided in \cite{elizalde1998multidimensional}. These works rely on the computation of modified Bessel functions of the second kind and related integrals, with their applicability restricted to special cases of the vectors $\bm x,\bm y$ and the lattice $\Lambda$. The modern approach for evaluating the Epstein zeta function is due to Crandall \cite{crandall2012unified}, whose representation offers superexponential convergence in the complete parameter range and relies exclusively upon the computation of incomplete gamma functions. A generalisation of Crandall's representation to point sets without translational invariance has recently been derived by one of the authors \cite{buchheitComputationLatticeSums2024}.

While Crandall's work forms the basis for an algorithm to compute the Epstein zeta function, no library currently offers an implementation. This is due to numerous numerical challenges in its implementation. These challenges include instabilities in incomplete gamma function implementations for negative arguments, as well as issues with the correct choice of truncation values and the treatment of numerical instabilities around singularities. 
Furthermore, the analytic properties of the Epstein zeta function, such as its joint \new{holomorphic extension} and singularity structure have not been rigorously studied yet.

This work addresses the above challenges. We provide a comprehensive account of the analytic properties of the Epstein zeta function and its meromorphic continuation, both in the exponent $\nu\in\mathds C$ and  
\new{in its vector arguments $\bm x,\bm y$ to subsets of $\mathds C^d$}.
Furthermore, we introduce the regularisation of the Epstein zeta function by decomposing it into a power-law singularity and a holomorphic function in a complex neighbourhood of the elementary cell of the reciprocal lattice.
We provide a Crandall-like representation for the Epstein zeta function and its regularisation, along with compact proofs.
Using this representation, we offer the first rigorous discussion of the symmetries, such as the functional equation, singularities, and \new{joint holomorphic extension} of the Epstein zeta function in all its arguments. 
 Based on Crandall's representation, we create an algorithm for computing the Epstein zeta
function and its regularisation in any dimension, for any lattice, and for any choice of
real arguments. We further include an efficient algorithm and implementation of the incomplete
gamma function, which offers full precision even for negative first arguments.
This algorithm is implemented in \href{https://github.com/epsteinlib/epsteinlib}{EpsteinLib}, a first-of-its-kind high-performance C
library for computing the Epstein zeta, including \new{bindings to Python, Julia, and Mathematica}. We benchmark our method using numerous known formulas, obtaining
full precision over the complete parameter range.
Finally, we apply our method to
the computation of quantum dispersion relations in spin systems and the evaluation
of Casimir energies in quantum field theory. 
The analysis of the properties of the Epstein zeta function, together with the algorithm and its implementation in EpsteinLib, aims to establish the Epstein zeta function as a standard special function.

This work is intended for a diverse audience with different interests and goals. We have, therefore, structured it as follows.
In Section \ref{sec:crandall}, we define the Epstein zeta function 
and present its elementary properties. We further provide a new proof of the efficiently computable representation, and discuss the joint \new{holomorphic extension} and the singularities of the Epstein zeta function.
Section \ref{sec:algorithm} introduces our numerical algorithm for its precise and efficient computation.
The precision of our library is demonstrated in extensive numerical experiments against known formulas in Section \ref{sec:experiments}. 
Section \ref{sec:application} showcases the application of our library to quantum spin wave dispersion relations and Casimir energies. We present our conclusions and an outlook in Section \ref{sec:outlook}.

\section{Crandall representation and properties of the Epstein zeta function}
\label{sec:crandall}

\subsection{Definition and elementary properties}
We begin our exposition by introducing the concept of a $d$-dimensional lattice with $d$ a positive integer.

\begin{definition}[Lattices]
For \(A\in\mathds{R}^{d\times d}\) regular, we call the set of points \(\Lambda = A\mathds{Z}^{d}\) a lattice. 
We denote by $E_{\Lambda}=A[-1/2,1/2)^d$
the elementary lattice cell
of \(\Lambda\)
of volume \(V_{\Lambda}=|\:\!\!\det A|\).
We further define the reciprocal lattice \(\Lambda^{\ast}=A^{-T}\mathds{Z}^{d}\)
with the
elementary lattice cell $E_{\Lambda}^{\ast}=A^{-T}[-1/2,1/2)^d$
of volume
\(V_{\Lambda^{\ast}}=1/|\:\!\!\det A|\).
\end{definition}
We then define the Epstein zeta function as follows.
\begin{definition}[Epstein zeta function]
\label{epsteindef}
Let $\Lambda$ be a $d$-dimensional lattice, $\bm x, \bm y \in \mathds{R}^d$ and $\nu \in \mathds{C}$.
The Epstein zeta function is defined as the meromorphic continuation of the lattice sum
\[
		\zeps{\Lambda,\nu}{\bm x}{\bm y} = \sums_{\bm z \in \Lambda} \, \frac{e^{-2\pi i \bm y \cdot \bm z}}{\left|\bm x- \bm z\right|^\nu},\qquad \mathrm{Re}(\nu)>d,
\]
 to \(\nu\in\mathds C\).
  \end{definition}

\new{Here, we have modernized the
notation for the vector arguments $\bm x,\bm y$
compared to the original notation introduced by Epstein \cite{epstein1903theorieI,epstein1903theorieII}.
}
The lattice sum converges absolutely for $\mathrm{Re}(\nu)>d$, as the following Lemma shows.
The idea of this proof can be found in \cite[Lemma 3.14]{Remmert}, who accredited the two-dimensional case to Weierstraß and the generalisation to Eisenstein. 
The domain of definition for the meromorphic continuation is analysed in detail in the next section in Theorem \ref{hol}.

\begin{lemma}
\label{lattice-sum-converges}
Let $\Lambda=A\mathds Z^d$ with $A\in \mathds R^{d\times d}$ regular, and $\nu \in \mathds C$ with $\mathrm{Re}(\nu)>d$. The lattice sum
\[\sums_{\bm z \in \Lambda} \frac{1}{\vert \bm z\vert^{\nu}}\]
then converges absolutely.
\end{lemma}
\begin{proof}
First, recall that  $\big||\bm z|^\nu \big| = |\bm z|^{s}$ with $s=\mathrm{Re}(\nu)$.
We now use the inequality 
\[\vert A\bm z\vert \ge  \Vert A^{-1} \Vert^{-1}  \vert \bm z\vert = c \vert \bm z\vert\]
 to bound the general lattice sum by a sum over $\mathds Z^d$, yielding
\[
\sums_{\bm z\in \Lambda}\frac{1}{\vert \bm z\vert^{s}}
\le 
\frac{2^d}{c^{s}}
\sums_{\bm z\in\mathds N_0^d}\frac{1}{\vert \bm z\vert^{s}}
=
\frac{2^d}{c^{s}}\sum_{n=1}^d\binom{d}{n}\sum_{\bm z\in\mathds N_+^n}\frac{1}{\vert \bm z\vert^{s}}
\]
where the last equality is obtained by reordering the summands with respect to the number $n$ of non-zero entries of $\bm z$.
Therefore, we only need to show that 
\[\sum_{\bm z\in\mathds N^n_+}\frac{1}{\vert \bm z\vert^{s}}\]
converges for \(s>d\) and \(1\le n\le d\).
Inserting the inequality of arithmetic and geometric means,
\[
\frac{z_1^2+\dots + z_n^2}{n} 
\geq (z_1^2 z_2^2\cdots z_n^2)^{1/n},
\]
yields
\[
\sum_{\bm z \in \mathds{N}^n_+} 
\frac{1}{\vert \bm z\vert^{s}} 
\leq
n^{-s/2}\sum_{\bm z \in \mathds{N}^n_+} (z_1 z_2\cdots z_d)^{-s/n}
=n^{-s/2}
\zeta(s/n)^n,
\]
where the Riemann zeta function converges as ${s/n\ge s/d >1}$.
\end{proof}

The Epstein zeta function exhibits numerous symmetries in its arguments $\bm{x}$ and $\bm{y}$, as well as with respect to lattice rescaling, which we summarise in the following lemma.

\begin{lemma}[Symmetries] 
\label{lem:syms}
\crandallreqs Then:
     \begin{enumerate}
\item (Inversion symmetry)  Inversion of $\bm x$ equals inversion of $\bm y$,
           \[\zeps{\Lambda, \nu}{-\bm x}{\bm y} = \zeps{\Lambda, \nu}{\bm x}{- \bm y},~ \text{and}~ \zeps{\Lambda, \nu}{-\bm x}{-\bm y} = \zeps{\Lambda, \nu}{\bm x}{\bm y}.\]
     \item (Translation symmetry) The Epstein zeta function is, up to a prefactor, $\Lambda$-periodic in $\bm x$ and $\Lambda^\ast$-periodic in $\bm y$. For $\bm u \in \Lambda$ and $\bm v \in \Lambda^*$, it holds that
     \[\zeps{\Lambda, \nu}{\bm x+\bm u}{\bm y+\bm v} = e^{-2 \pi i  \bm y \cdot \bm u} \zeps{\Lambda, \nu}{\bm x}{\bm y}.\]
     \item (Scale symmetry) For $s \in \mathds R\setminus\{0\}$,
    \[\zeps{\Lambda, \nu}{\bm x}{\bm y} = |s|^\nu \zeps{s \Lambda, \nu}{s \bm x}{\bm y / s}.\]
     \end{enumerate}
\end{lemma}

\begin{proof}
We may restrict our discussion to $\mathrm{Re}(\nu)>d$ by the uniqueness of the analytic continuation, whose existence we prove in Theorem \ref{hol}. There, the Epstein zeta function is defined via an absolutely convergent lattice sum by Lemma \ref{lattice-sum-converges}. 
The first property is a direct consequence of $-\Lambda=\Lambda$. For the second, we write
\[
    \zeps{\Lambda, \nu}{\bm x+\bm u}{\bm y+\bm v} = e^{-2\pi i \bm y\cdot \bm u}\sums_{\bm z \in \Lambda} \, 
    e^{-2\pi i \bm v \cdot \bm z}\frac{e^{-2\pi i \bm y \cdot (\bm z-\bm u)}}{\left|\bm x- (\bm z-\bm u)\right|^\nu}.
\]
The result follows from 
$\Lambda-\bm u = \Lambda$
and
$e^{-2\pi i \bm v \cdot \bm z} =1$ as $\bm v\cdot \bm z\in \mathds Z$ for $\bm z\in \Lambda$ and $\bm v\in \Lambda^\ast$. Finally, scaling symmetry for the Epstein zeta is obtained via
\[|s|^\nu \zeps{s \Lambda, \nu}{s \bm x}{\bm y / s}
= \sums_{\bm z \in \Lambda}\, |s|^\nu \frac{e^{-2 \pi i \bm y/s \cdot s \bm z}}{\left|s \bm z - s \bm x\right|^\nu} 
= \zeps{\Lambda, \nu}{\bm x}{\bm y}.
\qedhere
\]
\end{proof}

These symmetries can be effectively exploited to simplify computations involving generalised lattice sums; see Section \ref{sec:syms}.
Moreover, translational symmetry allows the consideration of sums over multi-atomic lattices,
where multiple particles with specific weights are arranged in the elementary lattice cell and $\Lambda$-periodically repeated in space.

\begin{remark}[Multi-atomic lattices]
\crandallreqs
Consider $n\in\mathds N_+$ particles at positions $\bm d_1,\ldots,\bm d_n\in E_{\Lambda}$
with weights $g_1,\ldots,g_n\in\mathds R$.
Then the holomorphic continuation of the multi-atomic lattice sum
$$
S=\sum_{i=1}^n \sums_{\bm z\in \Lambda+\bm d_i}g_i\frac{e^{-2\pi i\bm y\cdot \bm z}}{|\bm x-\bm z|^{\nu}}
,\qquad 
\Re\nu >d
$$
in $\nu$ is given in terms of the Epstein zeta function as 
$$
S=\sum_{i=1}^ng_ie^{-2\pi i \bm y\cdot \bm d_i}
\zeps{\Lambda,\nu}{\bm x-\bm d_i}{\bm y}.
$$
\end{remark}

To further discuss the properties of the Epstein zeta function and enable its efficient computation, we derive a representation for its holomorphic continuation in the following section.

\subsection{Computation of the Epstein zeta function and Crandall's representation}

The defining lattice sum of the Epstein zeta function is useful as a numerical tool for $\mathrm{Re}(\nu)\gg d$, yet the sum soon becomes numerically intractable as the real part of $\nu$ approaches $d$. It furthermore
does not allow for insights into the holomorphic continuation in $\nu$ as well as into
the properties of the function in its various parameters.
In this section, we derive
a representation that gives direct access to the holomorphic continuation of the
Epstein zeta function in all parameters. The representation is based on works
by Crandall \cite{crandall2012unified} with a compact reformulation in \cite{buchheitComputationLatticeSums2024}. It serves both as the basis for our numerical algorithm
and as a tool for proving the analytical properties of the Epstein zeta function.

We start by defining the Fourier transform of integrable functions.
\begin{definition}[Fourier transform]
Let $f :\mathds{R}^d \rightarrow \mathds{C}$ be integrable. We define its Fourier transform $\mathcal{F} f = \hat{f}$ via
\[
(\mathcal{F}f)(\bm k)
=\int_{\mathds{R}^d} f(\bm x)e^{-2\pi i\bm k\cdot\bm x}\,\mathrm{d}\bm x
,\qquad \bm k \in\mathds R^d.
\]
\end{definition}

The Poisson summation formula allows us to cast oscillatory sums over a lattice $\Lambda$ in terms of sums of the Fourier transform of the summand function over the reciprocal lattice $\Lambda^*$. 
\begin{lemma}[Poisson summation formula]
Let $f:\mathds{R}^d \rightarrow \mathds{C}$ be a continuous integrable function. If there exist $C,\varepsilon>0$ such that
\[|f(\bm z)|+|\hat{f}(\bm z)| \leq C (1+ |\bm z|)^{-d-\varepsilon},\quad \bm z\in \mathds R^d,\]
then 
\[V_\Lambda \sum_{\bm z \in {\Lambda}} f(\bm z ) e^{-2\pi i \bm z \cdot \bm y} = \sum_{\bm k \in \Lambda^\ast} \hat{f}(\bm k+\bm y),\quad \bm y\in \mathds R^d.\]
\end{lemma}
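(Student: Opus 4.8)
The plan is to reduce the statement to the classical Poisson summation formula with trivial phase and then supply the functional-analytic input that makes the Fourier-series manipulation rigorous. First I would absorb the oscillatory factor into the function: set $g(\bm x)=f(\bm x)\,e^{-2\pi i\bm x\cdot\bm y}$, which is again continuous and integrable, with $\hat g(\bm k)=\hat f(\bm k+\bm y)$ by a one-line computation. A short estimate, splitting into the regions $|\bm z|\ge 2|\bm y|$ and $|\bm z|<2|\bm y|$ and using that $\hat f$ is bounded (being the Fourier transform of an $L^1$ function), shows that $g$ still satisfies a decay bound $|g(\bm z)|+|\hat g(\bm z)|\le C'(1+|\bm z|)^{-d-\varepsilon}$ with a possibly larger constant $C'$. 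With this reduction the claim becomes $V_\Lambda\sum_{\bm z\in\Lambda}g(\bm z)=\sum_{\bm k\in\Lambda^\ast}\hat g(\bm k)$.

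Next I would form the $\Lambda$-periodization $F(\bm x)=\sum_{\bm z\in\Lambda}g(\bm x+\bm z)$. The decay hypothesis makes this series converge absolutely and uniformly on compact subsets of $\mathds R^d$, so $F$ is continuous and $\Lambda$-periodic. Its Fourier coefficients with respect to $\Lambda^\ast$ are $c_{\bm k}=V_\Lambda^{-1}\int_{E_\Lambda}F(\bm x)e^{-2\pi i\bm k\cdot\bm x}\d\bm x$; unfolding this integral over the tiling $\mathds R^d=\bigsqcup_{\bm z\in\Lambda}(E_\Lambda+\bm z)$, interchanging sum and integral (legitimate by absolute convergence), and using $e^{-2\pi i\bm k\cdot\bm z}=1$ for $\bm k\in\Lambda^\ast$, $\bm z\in\Lambda$, yields $c_{\bm k}=V_\Lambda^{-1}\hat g(\bm k)$.

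It then remains to evaluate the Fourier series of $F$ at $\bm x=\bm 0$. The decay bound on $\hat g$ gives $\sum_{\bm k\in\Lambda^\ast}|c_{\bm k}|=V_\Lambda^{-1}\sum_{\bm k\in\Lambda^\ast}|\hat g(\bm k)|<\infty$, so $G(\bm x):=\sum_{\bm k\in\Lambda^\ast}c_{\bm k}e^{2\pi i\bm k\cdot\bm x}$ converges uniformly to a continuous $\Lambda$-periodic function. Since $F$ and $G$ are continuous and have the same Fourier coefficients, they coincide; I would justify this by uniqueness of Fourier coefficients on the torus $\mathds R^d/\Lambda$, e.g. via Fej\'er's theorem or Stone--Weierstrass applied to trigonometric polynomials. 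Evaluating $F(\bm 0)=G(\bm 0)$ gives $\sum_{\bm z\in\Lambda}g(\bm z)=V_\Lambda^{-1}\sum_{\bm k\in\Lambda^\ast}\hat g(\bm k)$, and multiplying by $V_\Lambda$ and substituting back $g(\bm z)=f(\bm z)e^{-2\pi i\bm z\cdot\bm y}$, $\hat g(\bm k)=\hat f(\bm k+\bm y)$ yields the assertion.

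I expect the main obstacle to be exactly the identification $F=G$: a priori the Fourier series of the continuous function $F$ need only converge in $L^2$, whereas the formula hinges on pointwise evaluation at $\bm 0$, which is precisely what the uniform-convergence-to-$F$ statement supplies; everything else (the phase reduction, the decay bookkeeping, the unfolding of the integral) is routine. An alternative that avoids handling a general lattice directly is to push the argument through the linear substitution $\bm x=A\bm u$, which turns $\Lambda$ into $\mathds Z^d$ and $\Lambda^\ast$ into $\mathds Z^d$ and produces the factor $V_\Lambda=|\det A|$ automatically, reducing everything to the textbook Poisson summation formula on $\mathds Z^d$; I would likely present it this way if brevity is a priority.
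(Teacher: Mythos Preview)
Your argument is correct and is the standard periodization proof; the phase reduction via $g(\bm x)=f(\bm x)e^{-2\pi i\bm x\cdot\bm y}$, the unfolding of the Fourier coefficient, and the pointwise identification $F(\bm 0)=G(\bm 0)$ using absolute summability of the coefficients all go through as you describe. There is no gap.

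The paper, however, does not prove this lemma at all: it simply cites Stein--Weiss for the case $\Lambda=\mathds Z^d$ and a separate reference for the passage to general lattices. Your alternative route at the end---pushing the problem through the change of variables $\bm x=A\bm u$ to reduce $\Lambda$ to $\mathds Z^d$ and pick up the Jacobian $V_\Lambda$---is precisely the ``slight generalization'' the paper alludes to. So you have supplied what the paper outsources, and your two approaches (direct periodization over $\Lambda$ versus linear change of variables plus the $\mathds Z^d$ result) are both standard; the second is shorter if one is willing to quote the integer-lattice case, while the first is self-contained.
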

For the lattice $\Lambda=\mathds Z^d$, the result is well-known, see \cite[Corollary 2.6]{steinweiss1972fourier}. The case of general lattices is a slight generalisation, shown for instance in \cite[Lemma 3.2]{buchheit2022singular}.

The incomplete gamma functions and their regularisation form key ingredients in the efficiently computable representation of the Epstein zeta function.

\begin{remark}
\label{gammaGammaStar}
We denote by $\Gamma(s)$ the holomorphic extension of the gamma function 
$$
\new{
\Gamma(s)=
\int_0^{\infty}t^{s}e^{-t}\frac{\d t}{t}
,\qquad \Re{s}>0,
}
$$
to $s\in\mathds C\setminus(-\mathds N_0)$.
For any $x\in\mathds C$, $\Re x > 0$,
we denote 
by $\Gamma(s,x)$ the holomorphic extension of the upper incomplete Gamma function
$$
\new{
\Gamma(s,x)=
\int_x^{\infty}t^{s}e^{-t}\frac{\d t}{t}
,\qquad \Re{s}>0,
}
$$
to  
$s\in\mathds C$ 
and 
we denote by $\gamma(s,x)$ the holomorphic extension of the lower incomplete gamma function 
to 
$s\in \mathds C\setminus (-\mathds N_0)$,
\new{which we define by}
the fundamental relation
$$\Gamma(s)=\Gamma(s,x)+\gamma(s,x),
$$
see \cite[Section §8.2(ii)]{NIST:DLMF}.
Tricomi's twice regularised lower incomplete 
gamma function~$\gamma^*(s,x)$ is the holomorphic extension
$$
\gamma^*(s, x) = \frac{\gamma(s, x)}{x^s \Gamma(s)}
,\qquad s,x>0,
$$
to a jointly entire function in  $(s,x)\in\mathds C\times \mathds C$, see \cite{Tricomi}. 
\end{remark}

The key idea for the Crandall representation is to decompose the function $\vert  \bm \cdot\vert^{-\nu}$ into a superexponentially decaying function that includes the singularity at $\bm 0$ and a smooth function that includes the asymptotic power-law decay. We call these functions Crandall functions.
\new{Closely related decompositions already appear in the classical literature. Riemann's meromorphic continuation of the Riemann zeta function uses a corresponding splitting after a Mellin transform, together with Poisson summation \cite{riemann}. The same general mechanism underlies the meromorphic continuation of the Epstein zeta function via generalized theta functions \cite{epstein1903theorieI,epstein1903theorieII}. In the particular case $\nu=1$ (Coulomb interaction), the Crandall representation reduces to the classical Ewald splitting \cite{ewald1921berechnung}. Its key feature is therefore not the existence of the splitting itself, but its formulation directly at the level of the kernel $\vert \bm\cdot\vert^{-\nu}$. The kernel-level representation in terms of Crandall functions makes the analytic structure clear, enables rigorous subtraction of singular terms without catastrophic cancellation, and provides a natural foundation for an efficient numerical implementation.}

\begin{definition}[Crandall functions]
\label{defCrandallFuncs}
Let $\nu\in\mathds C$ and $\bm z\in\mathds R^d\setminus\{\bm 0\}$.
We define the upper Crandall function $G_{\nu}(\bm z)$ as
$$
G_{\nu}(\bm{z})= 
	\frac{\Gamma(\nu/2,\pi \bm{z}^2)}{(\pi \bm{z}^2)^{\nu/2}}
,\qquad G_{\nu}(\bm 0)
=-\frac{2}{\nu},
$$
\new{using the notation
\[\bm{z}^2=\sum_{j=1}^d z_j^2.
\]}
Let $\nu\in\mathds C\setminus(-2\mathds N_0)$ and $\bm z\in \mathds C^d$. Then, the lower Crandall function $g_{\nu}(\bm z)$ is defined as
$$
g_{\nu}(\bm z)=
\Gamma(\nu/2) \gamma^*(\nu/2,\pi \bm z^2)= \frac{\gamma(\nu/2,\pi \bm{z}^2)}{(\pi \bm{z}^2)^{\nu/2}} .
$$
\end{definition}

In particular, the upper Crandall function $G_{\nu}(\bm z)$ admits superexponential decay in the second argument,
which permits truncation of the resulting lattice sums in the computation of the Epstein zeta function.

The following lemma discusses the \new{holomorphic extension} of the Crandall functions in both arguments, \new{preparing the holomorphic extension of the Epstein zeta function itself,} and provides bounds that describe their asymptotic decay as $\vert \bm z\vert\to \infty$.

\begin{lemma}[Properties of Crandall functions]
\label{lem:propCrandall}
Let $\nu \in\mathds C$ and $\bm z\in D$ with the complex sector
$$
D=\{\bm u\in\mathds C^d:|\Re{\bm u}|>|\Im{\bm u}|\},
$$
where real and imaginary parts are applied component-wise and where $\vert \bm \cdot \vert$ denotes the Euclidean norm.
\begin{enumerate}
\item (Holomorphy) The lower Crandall function $g_{\nu}(\bm z)$ 
\new{can be jointly holomorphically extended to} 
$(\nu,\bm z)\in\mathds C\setminus(-2\mathds N_0)\times \mathds C^d$. The upper Crandall function $G_{\nu}(\bm z)$ can be extended to a jointly holomorphic function in $(\nu,\bm z)\in \mathds C \times D$. Finally,
$G_{\nu}(\bm 0)=-2/\nu$ is holomorphic in $\nu\in\mathds C\setminus\{0\}$
with a simple pole at $\nu=0$.
\item (Fundamental relation)
For any $\lambda>0$, it holds
\[
( \bm z^2 )^{-\nu/2} = \frac{(\pi/\lambda^2 )^{\nu/2}}{\Gamma(\nu/2)} \Big(G_\nu(\bm z/\lambda)+g_\nu(\bm z/\lambda)\Big).
\]
Further, for $\mathrm{Re}(\nu)<0$, we have $\lim_{\alpha \to 0_+} G_\nu(\alpha\bm z)=G_\nu(\bm 0)=-2/\nu$. For all $\nu \in \mathds C$, we have
\[
\frac{1}{\Gamma(\nu/2)}(G_\nu(\bm 0)+g_\nu(\bm 0)) = 0,
\]
\new{where division by the Gamma function removes the poles at $\nu\in -2 \mathds N_0$.}
\item (Uniform bound)
Define
\[
u=\pi \mathrm{Re}(\bm z^2),\quad v=\max\{0,\mathrm{Re}(\nu/2)-1\}.\] 
We then have
$$
|G_\nu(\bm z)| \leq \frac{e^{-u}}{u-v}, \quad u>v.
$$
\end{enumerate}
\end{lemma}
\begin{proof}
    (1) (Holomorphy) The lower Crandall function is the product of $\Gamma(\nu/2)$, which is holomorphic on $\mathds C\setminus (-2\mathds N_0)$, and the jointly entire function $(\nu,\bm z)\mapsto\gamma^*(\nu/2,\pi \bm z^2)$. Thus, \new{it can be holomorphically extended to} $(\nu,\bm z)\in (\mathds C\setminus (-2\mathds N_0))\times \mathds C^d$. For the upper Crandall function, first notice that $f(a,z)=\Gamma(a,z)/z^a$ is jointly holomorphic in $a\in \mathds C$ and $z\in \mathds C$ with $\mathrm{Re}(z)>0$, which follows immediately from the representation \cite[Eq.~(2.1)]{borwein2009uniform},
    \[
    f(a,z)=e^{-z}\int_{0}^\infty e^{-z t}(1+t)^{a-1}\,\mathrm d t.
    \]
    As $\bm z\in D$ is equivalent to $\mathrm{Re}(\bm z^2)>0$, we have that 
    \[
    G_\nu(\bm z) = f(\nu/2, \pi \bm z^2) 
    \]
    is holomorphic on $(\nu,\bm z) \in \mathds C\times D$. 

    (2) (Fundamental relation) The fundamental relation of the Crandall functions follows directly after inserting their definition from the associated relation of the incomplete Gamma functions in Remark \ref{gammaGammaStar}. Furthermore, note that
    \[
    \lim_{\alpha\to 0_+}f(a,\alpha z) = \int_{0}^\infty (1+t)^{a-1}\d t=-1/a,\quad \mathrm{Re}(a)<0.
    \]
    Thus for fixed $\nu$, $G_\nu(\bm z)$ can be continuously extended to $\bm z= \bm 0$ and the limit coincides with the definition $G_\nu(\bm 0)=-2/\nu$. Finally, for $\mathrm{Re}(\nu)<0$, we have
    \[
    \lim_{\alpha \to 0_+} ((\alpha\bm z)^2)^{-\nu/2} = 0 = \frac{1}{\Gamma(\nu/2)} \Big(G_\nu(\bm 0)+g_\nu(\bm 0)\Big).
    \]
    
    (3) (Uniform bound) 
    Using the integral representation above, we find with $w = \mathrm{Re}(u)$ and $t = \mathrm{Re}(a)$
    \[
    \vert f(a,u)\vert \le  e^{-w} \int_0^\infty  e^{-w\mu}(1+\mu)^{t-1}\mathrm d \mu.
    \]
    Following the proof of Theorem 2.1 in \cite{borwein2009uniform}, we use the estimates $(1+\mu)^{t-1}\le(e^\mu)^{t-1}$ for $t\ge 1$ and $(1+\mu)^{t-1}< 1$  for $t<1$. Inserting them into the integral above yields
    \[
    \vert f(a,z)\vert\le  e^{-w} \frac{1}{w-\max\{0,t-1\}},\quad w>\max\{0,t-1\}.
    \]
    We find the desired uniform bound after setting $a=\nu/2$ and $u=\pi \bm z^2$.
\end{proof}

The Fourier transform connects the lower to the upper Crandall function.
\begin{lemma}
\label{gn-fourier}
Let $\mathrm{Re}(\nu)>d$ and $\lambda >0$. Then
\[\mathcal{F}(g_\nu(\cdot/\lambda))(\bm k) = \lambda^d G_{d-\nu}(\lambda \bm k)
,\qquad \bm k\in\mathds R^d.\]
\end{lemma}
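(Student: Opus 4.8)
The plan is to reduce everything to the Fourier transform of a Gaussian by using the Schwinger-type integral representation of the lower Crandall function. First I would substitute $t=\pi|\bm z|^2\tau$ in the defining integral $\gamma(\nu/2,\pi|\bm z|^2)=\int_0^{\pi|\bm z|^2}t^{\nu/2-1}e^{-t}\d t$ to obtain, for $\mathrm{Re}(\nu)>0$ and $\bm z\in\mathds R^d$, the representation
\[
g_\nu(\bm z)=\frac{\gamma(\nu/2,\pi|\bm z|^2)}{(\pi|\bm z|^2)^{\nu/2}}=\int_0^1\tau^{\nu/2-1}e^{-\pi|\bm z|^2\tau}\d\tau,
\]
so that $g_\nu(\bm z/\lambda)=\int_0^1\tau^{\nu/2-1}e^{-\pi|\bm z|^2\tau/\lambda^2}\d\tau$.

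Next, for $\mathrm{Re}(\nu)>d$ I would verify that $g_\nu(\cdot/\lambda)\in L^1(\mathds R^d)$: by \Cref{propCrandall} and \Cref{defCrandallFuncs} the function is continuous and bounded near $\bm 0$, with $g_\nu(\bm 0)=2/\nu$, while $\gamma(\nu/2,\pi|\bm z|^2)\to\Gamma(\nu/2)$ forces decay of order $|\bm z|^{-\nu}$ as $|\bm z|\to\infty$, which is integrable at infinity precisely when $\mathrm{Re}(\nu)>d$. Hence $\mathcal F(g_\nu(\cdot/\lambda))(\bm k)$ is given by an absolutely convergent integral, and I may apply Fubini–Tonelli to interchange the $\bm z$- and $\tau$-integrals; this is legitimate because
\[
\int_{\mathds R^d}\int_0^1\tau^{\mathrm{Re}(\nu)/2-1}e^{-\pi|\bm z|^2\tau/\lambda^2}\d\tau\,\d\bm z
=\lambda^d\int_0^1\tau^{(\mathrm{Re}(\nu)-d)/2-1}\d\tau<\infty
\]
exactly when $\mathrm{Re}(\nu)>d$. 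Inserting the elementary Gaussian Fourier transform $\mathcal F\bigl(e^{-\pi(\tau/\lambda^2)|\cdot|^2}\bigr)(\bm k)=\lambda^d\tau^{-d/2}e^{-\pi\lambda^2|\bm k|^2/\tau}$ into the swapped integral then gives
\[
\mathcal F(g_\nu(\cdot/\lambda))(\bm k)=\lambda^d\int_0^1\tau^{(\nu-d)/2-1}e^{-\pi\lambda^2|\bm k|^2/\tau}\d\tau.
\]

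Finally I would substitute $\sigma=1/\tau$ to rewrite the right-hand side as $\lambda^d\int_1^\infty\sigma^{(d-\nu)/2-1}e^{-\pi\lambda^2|\bm k|^2\sigma}\d\sigma$, and then undo the first substitution applied this time to the upper incomplete gamma function, via $\Gamma((d-\nu)/2,w)=\int_w^\infty t^{(d-\nu)/2-1}e^{-t}\d t$ with $w=\pi\lambda^2|\bm k|^2$, to identify the integral, for $\bm k\neq\bm 0$, with $\lambda^d\,\Gamma((d-\nu)/2,\pi\lambda^2|\bm k|^2)/(\pi\lambda^2|\bm k|^2)^{(d-\nu)/2}=\lambda^d G_{d-\nu}(\lambda\bm k)$. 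For $\bm k=\bm 0$ the integral evaluates to $\lambda^d\int_1^\infty\sigma^{(d-\nu)/2-1}\d\sigma=-2\lambda^d/(d-\nu)=\lambda^d G_{d-\nu}(\bm 0)$ since $\mathrm{Re}(d-\nu)<0$, so the claimed identity holds for all $\bm k\in\mathds R^d$; this case can alternatively be handled by continuity of both sides, using \Cref{propCrandall}(2). The computation is elementary throughout; the only point requiring care — and hence the main obstacle — is the bookkeeping of convergence, since $\mathrm{Re}(\nu)>d$ is exactly the hypothesis that simultaneously secures $L^1$-membership, the Fubini interchange, and convergence of the $\bm k=\bm 0$ integral to the special value $G_{d-\nu}(\bm 0)$, rather than any delicate estimate.
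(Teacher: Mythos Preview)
Your proof is correct and follows essentially the same route as the paper: an integral representation of $g_\nu$ obtained by substitution in the defining integral of $\gamma(\nu/2,\cdot)$, a Fubini--Tonelli swap with the Gaussian Fourier integral (justified precisely by $\mathrm{Re}(\nu)>d$), and a final substitution identifying the result with $G_{d-\nu}$, with the $\bm k=\bm 0$ case handled separately. The only cosmetic difference is the choice of integration variable (your $\tau$ corresponds to the paper's $s^2$, with $\lambda$ absorbed differently), and your Fubini justification is in fact slightly more explicit than the paper's.
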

\begin{proof}
We write the lower Crandall function in terms
of a     defining integral
\[g_\nu(\bm z / \lambda) = \Bigl(\frac{\lambda^2}{\pi \bm z^2} \Bigr)^{\nu/2} \int \limits_0^{\pi \bm z^2 / \lambda^2} t^{\nu/2} e^{-t}\,\frac{\rm{d}t}{t},\]
see \cite[Eq. (6.5.2)]{abramowitz}.
Substituting $t = \pi \bm z^2 s^2$ gives
\[g_\nu(\bm z / \lambda) = 2 \lambda^\nu \int \limits_0^{1/\lambda} s^{\nu} e^{-\pi \bm z^2 s^2}\, \frac{\rm{d}s}{s}.\]
The Fourier transform now readily follows from 
$\mathcal F e^{-\pi \bm (\cdot)^2 s^2} = s^{-d}e^{-\pi \bm (\cdot)^2/s^2}$ and reads
\[
\mathcal F \big(g_\nu (\bm \cdot/\lambda)\big) (\bm k) = 2 \lambda^\nu \int \limits_0^{1/\lambda} s^{\nu-d} e^{-\pi \bm{k}^2/s^2} \frac{\rm{d}s}{s},\]
where the exchange of the integration order is justified due to the Fubini-Tonelli theorem, as the resulting integrand is absolutely integrable due to the restriction $\mathrm{Re}(\nu)>d$.
For $\bm k = \bm 0$, the integral evaluates to
\[\mathcal{F}(g_\nu)(\bm 0) = -\lambda^d \frac{2}{d-\nu} = \lambda^dG_{d-\nu}(\bm 0).\]
For $\bm k\neq \bm 0$, we obtain the desired result after substituting $t = \pi \bm k^2 / s^2$,
\[\mathcal{F}(g_\nu)(\bm k) = 
\lambda^d (\pi \lambda^2 \bm k^2)^{-(d-\nu)/2} 
\int_{\pi \lambda^2 \bm k^2}^\infty t^{(d-\nu)/2} e^{-t}\,\frac{\rm{d}t}{t} = \lambda^d G_{d-\nu}(\lambda \bm k).
\qedhere
\]
\end{proof}

Crandall's representation now follows by rewriting the lattice sum for the Epstein zeta function in terms of the upper and lower Crandall functions. By applying the Poisson summation formula to the sum involving the lower Crandall functions, the Epstein zeta function is expressed in terms of two superexponentially convergent lattice sums, thus forming the basis for its efficient computation. The resulting representation can be considered as a generalization of Ewald's method  \cite{ewald1921berechnung}.

  \begin{theorem}[Crandall representation]
  \label{crandall}
 \crandallreqs
	 Then  the Epstein zeta function is well-defined and
	 for any $\lambda>0$, it holds that
\begin{align*}
\zeps{\Lambda,\nu}{\bm{x}}{\bm{y}}
=\frac{(\pi/\lambda^2 )^{\nu/2}}{\Gamma(\nu/2)}\Bigg[
\sum_{\bm{z}\in\Lambda}
&G_{\nu}\Big(\frac{\bm{z}-\bm{x}}{\lambda}\Big)e^{-2\pi i\bm{y}\cdot\bm{z}}
\\
&
+\frac{\lambda^d}{V_{\Lambda}}
\sum_{\bm{k}\in\Lambda^{\ast}}G_{d-\nu}
(\lambda(\bm{k}+\bm{y}))e^{-2\pi i\bm{x}\cdot(\bm{k}+\bm{y})}
\Bigg].
\end{align*}
\end{theorem}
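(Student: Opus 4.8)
The plan is to start from the defining absolutely convergent lattice sum for $\Re\nu>d$ and apply the fundamental relation of the Crandall functions from \Cref{propCrandall}(2) termwise. Concretely, for each $\bm z\in\Lambda$ with $\bm z\neq\bm x$, writing $(\bm z-\bm x)^2=|\bm z-\bm x|^2$, we have
\[
|\bm z-\bm x|^{-\nu}=\frac{(\lambda^2/\pi)^{-\nu/2}}{\Gamma(\nu/2)}\Bigl(G_\nu\bigl((\bm z-\bm x)/\lambda\bigr)+g_\nu\bigl((\bm z-\bm x)/\lambda\bigr)\Bigr).
\]
Summing against $e^{-2\pi i\bm y\cdot\bm z}$ splits the Epstein zeta function into a $G_\nu$-sum and a $g_\nu$-sum. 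The $G_\nu$-sum is already the first term in the claimed formula; the remaining work is to transform the $g_\nu$-sum via Poisson summation. Here one must be slightly careful about the excluded term $\bm z=\bm x$: when $\bm x\in\Lambda$, the relation $\frac{1}{\Gamma(\nu/2)}(G_\nu(\bm 0)+g_\nu(\bm 0))=0$ from \Cref{propCrandall}(2) shows that adding the $\bm z=\bm x$ summand to both the $G_\nu$- and $g_\nu$-sums changes nothing, so we may treat both as unrestricted sums over $\Lambda$.

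Next I would apply the Poisson summation formula to $f(\bm z)=g_\nu\bigl((\bm z-\bm x)/\lambda\bigr)$. To invoke the lemma I need the decay hypothesis: since $\Re\nu>d$, \Cref{gn-fourier} gives $\widehat{g_\nu(\cdot/\lambda)}(\bm k)=\lambda^d G_{d-\nu}(\lambda\bm k)$, and the uniform bound in \Cref{propCrandall}(3) shows $G_{d-\nu}$ decays superexponentially, while $g_\nu(\bm z/\lambda)=(\pi\bm z^2/\lambda^2)^{-\nu/2}\gamma(\nu/2,\pi\bm z^2/\lambda^2)$ behaves like $|\bm z|^{-\nu}$ at infinity, hence satisfies the required $(1+|\bm z|)^{-d-\varepsilon}$ bound with $\varepsilon=\Re\nu-d>0$. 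The shift by $\bm x$ only multiplies the Fourier transform by $e^{-2\pi i\bm k\cdot\bm x}$ (standard translation property), so Poisson summation yields
\[
\sum_{\bm z\in\Lambda}g_\nu\bigl((\bm z-\bm x)/\lambda\bigr)e^{-2\pi i\bm y\cdot\bm z}
=\frac{\lambda^d}{V_\Lambda}\sum_{\bm k\in\Lambda^\ast}G_{d-\nu}\bigl(\lambda(\bm k+\bm y)\bigr)e^{-2\pi i\bm x\cdot(\bm k+\bm y)}.
\]
One bookkeeping point: Poisson summation as stated reads $V_\Lambda\sum_{\bm z}f(\bm z)e^{-2\pi i\bm z\cdot\bm y}=\sum_{\bm k}\widehat f(\bm k+\bm y)$, so I must divide by $V_\Lambda$ and carefully track the phase $e^{-2\pi i\bm k\cdot\bm x}$ combining with $e^{-2\pi i\bm y\cdot\bm x}$ coming from the evaluation at $\bm k+\bm y$; collecting these gives exactly the phase $e^{-2\pi i\bm x\cdot(\bm k+\bm y)}$ in the statement. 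Multiplying through by the prefactor $(\lambda^2/\pi)^{-\nu/2}/\Gamma(\nu/2)$ and adding the untouched $G_\nu$-sum produces the asserted identity for $\Re\nu>d$.

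Finally, for well-definedness and the extension to all admissible $\nu$: both lattice sums on the right-hand side converge absolutely and locally uniformly in $\nu$ for \emph{all} $\nu\in\mathds C$ by the superexponential decay bound in \Cref{propCrandall}(3) (the argument sector condition $\lambda(\bm k+\bm y)\in D$ is automatic since $\lambda(\bm k+\bm y)\in\mathds R^d$, and similarly $(\bm z-\bm x)/\lambda\in\mathds R^d\subset D$), and $1/\Gamma(\nu/2)$ is entire. The only subtlety is the term $G_{d-\nu}(\bm 0)=-2/(d-\nu)$ arising when $\bm y\in\Lambda^\ast$, which has a pole at $\nu=d$; this is exactly why the hypothesis $\nu\neq d$ when $\bm y\in\Lambda^\ast$ appears, and otherwise the right-hand side is holomorphic in $\nu$. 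Since it agrees with the Epstein zeta function on $\Re\nu>d$, it furnishes the meromorphic continuation, proving both that the continuation is well-defined and that the formula holds. I expect the main obstacle to be the careful handling of the excluded diagonal term $\bm z=\bm x$ together with the $\bm k+\bm y=\bm 0$ term — making sure the $G_\nu(\bm 0)$/$g_\nu(\bm 0)$ cancellation is applied correctly and that no spurious pole or missing term is introduced when $\bm x\in\Lambda$ or $\bm y\in\Lambda^\ast$; the analytic estimates themselves are routine given the lemmata already established.
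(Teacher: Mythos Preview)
Your proposal is correct and follows essentially the same route as the paper: split via the fundamental relation of \Cref{propCrandall}(2), use the cancellation $(G_\nu(\bm 0)+g_\nu(\bm 0))/\Gamma(\nu/2)=0$ to unrestrict the sums, apply Poisson summation to the $g_\nu$-sum using \Cref{gn-fourier}, and then extend by analytic continuation. The only cosmetic difference is that the paper first restricts to real $\nu>d$ (to invoke positivity of the Crandall functions when checking the Poisson hypotheses) before continuing analytically, whereas you work directly with $\Re\nu>d$; both are fine.
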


\begin{proof}
For the first part, we assume $\nu \in \mathds{R}$ such that $\nu > d$. Then the defining lattice sum of the Epstein zeta function converges absolutely by  
Lemma \ref{lattice-sum-converges}.

We then use the fundamental relation 
 in 
 Lemma \ref{lem:propCrandall} (2) to separate the power-law singularity into the upper and lower Crandall functions
with a Riemann splitting parameter $\lambda>0$.
Inserting the result into the definition of the Epstein zeta function yields
\[\zeps{\Lambda,\nu}{\bm x}{\bm y} \frac{\Gamma(\nu/2)}{(\pi/\lambda^2)^{\nu/2}}
= \sum_{\bm z \in \Lambda\setminus\{\bm x\}} G_\nu\Big(\frac{\bm{z}-\bm{x}}{\lambda}\Big) e^{-2\pi i \bm y \cdot\bm z}+\sum_{\bm z \in \Lambda\setminus\{\bm x\}} g_\nu\Big(\frac{\bm{z}-\bm{x}}{\lambda}\Big) e^{-2\pi i \bm y \cdot\bm z} .
\]
Since \(\big(G_{\nu}(\bm 0)+g_{\nu}(\bm 0)\big)/\Gamma(\nu/2)=0\), we can extend the summation to the whole lattice $\Lambda$ in both sums without altering the result. The first sum already appears in Crandall's representation, whereas the second is brought to the desired form through Poisson summation.  
Note that $g_{\nu}$ is smooth by 
Definition \ref{defCrandallFuncs} and decreases sufficiently fast for any choice of $\nu>d$ as a consequence of the fundamental relation 
Lemma \ref{lem:propCrandall} (2), because for positive real-valued $\nu$, the Crandall functions are both non-negative. By 
Lemma \ref{gn-fourier}, we have that  $\mathcal F g_{\nu}(\bm \cdot/\lambda) = \lambda^d G_{d-\nu}(\lambda \,\bm \cdot)$, which is a continuous and superexponentially decreasing function by 
Lemma \ref{lem:propCrandall}\,(3). Thus, the requirements for 
Poisson summation hold, and the resulting sum reads
\[\frac{\lambda^d}{V_\Lambda} \sum_{\bm k \in \Lambda^*} G_{d-\nu}\bigl(\lambda(\bm k + \bm y)^2 \bigr) e^{-2\pi  i  \bm x \cdot (\bm k + \bm y)},\]
obtaining the desired representation.

In Theorem \ref{hol}, we show that the Crandall representation converges to a holomorphic function in $\nu \in \mathds{C} \setminus \{d\}$ if $\bm y \in \Lambda^*$ and to an entire function in $\nu \in \mathds{C}$ if \(\bm y\in\mathds R^d\setminus\Lambda^*\). 
By the identity theorem of holomorphic functions, this implies that Crandall's representation provides the unique holomorphic continuation of the Epstein zeta function.
\end{proof}

The advantage of the Crandall representation over other ways to compute the
 Epstein zeta function is the superexponential decay of the function $G_{\nu}(\bm \cdot)$. The
 resulting two lattice sums can be truncated to lattice points within a ball of radius
 $r >0$ around $\bm x$ and $\bm y$, as discussed in Section \ref{sec:algorithm}, yielding an efficiently computable
 representation.

\subsection{Analytic properties}
\label{sec:props}

Crandall’s representation forms the basis of our algorithm for the numerical evaluation of the Epstein zeta function and enables a rigorous analysis of its analytic properties.  In particular, the functional equation of the Epstein zeta function follows directly from this representation.

\begin{corollary}[Functional equation]
\label{funqeq}
\crandallreqs
The product
\[
\frac{(V_{\Lambda}^{2/d}/\pi)^{\nu/2}}{\Gamma\big((d-\nu)/2\big)}
e^{\pi i \bm x \cdot \bm y}\zeps{\Lambda, \nu}{\bm x}{\bm y}
\]
is then invariant under the parameter transformation
\[(\Lambda,\nu,\bm x,\bm y)\to 
(\Lambda^{\ast},d-\nu,\bm y,-\bm x).\]
\end{corollary}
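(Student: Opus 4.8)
The plan is to derive the functional equation directly from Crandall's representation (\Cref{crandall}), which holds for every admissible \(\nu\in\mathds C\) and every splitting parameter \(\lambda>0\), so that no separate appeal to analytic continuation is needed. Concretely, I would write down \Cref{crandall} once for \(\zeps{\Lambda,\nu}{\bm x}{\bm y}\) with parameter \(\lambda\) and once for \(\zeps{\Lambda^{\ast},d-\nu}{\bm y}{-\bm x}\) with parameter \(1/\lambda\), using \((\Lambda^{\ast})^{\ast}=\Lambda\), \(V_{\Lambda^{\ast}}=1/V_{\Lambda}\), and \(d-(d-\nu)=\nu\). Choosing reciprocal splitting parameters is the whole point: the \(\Lambda\)-sum built from \(G_{\nu}(\bm\cdot/\lambda)\) in the first representation then has the same summand structure as the \(\Lambda\)-sum built from \(G_{\nu}(\lambda\,\bm\cdot)\) coming out of the second, and likewise for the two reciprocal-lattice sums.

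Matching the two expressions then rests on three elementary observations. First, each Crandall function is even, \(G_{\mu}(-\bm z)=G_{\mu}(\bm z)\), since it depends on \(\bm z\) only through \(\bm z^{2}\); combined with \(-\Lambda^{\ast}=\Lambda^{\ast}\) this lets me substitute \(\bm k\mapsto-\bm k\) in the reciprocal-lattice sum of the transformed representation so that its argument becomes \(\lambda(\bm k+\bm y)\), matching the original. Second, pulling a factor \(e^{2\pi i\bm x\cdot\bm y}\) out of each of the two transformed sums — from the exponential \(e^{-2\pi i\bm y\cdot(\bm k-\bm x)}\) and from the reindexed exponential — produces exactly the oscillatory phases of the original representation. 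Third, the two bracketed expressions are then equal up to the single overall constant \(V_{\Lambda}\lambda^{-d}\), which I factor out.

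It remains to collect the scalar prefactors. After the substitution the \(\lambda\)-dependence must cancel — a convenient consistency check, since the left-hand side is \(\lambda\)-independent — and the surviving factor is \(\pi^{d/2-\nu}V_{\Lambda}\,\Gamma(\nu/2)/\Gamma((d-\nu)/2)\), giving
\[
\zeps{\Lambda^{\ast},d-\nu}{\bm y}{-\bm x}
= \pi^{d/2-\nu}V_{\Lambda}\,\frac{\Gamma(\nu/2)}{\Gamma((d-\nu)/2)}\,e^{2\pi i\bm x\cdot\bm y}\,\zeps{\Lambda,\nu}{\bm x}{\bm y},
\]
which, using \(V_{\Lambda^{\ast}}^{2/d}=V_{\Lambda}^{-2/d}\), is equivalent to the claimed invariance of \(\big(V_{\Lambda}^{2/d}/\pi\big)^{\nu/2}\Gamma((d-\nu)/2)^{-1}e^{\pi i\bm x\cdot\bm y}\zeps{\Lambda,\nu}{\bm x}{\bm y}\) under \((\Lambda,\nu,\bm x,\bm y)\mapsto(\Lambda^{\ast},d-\nu,\bm y,-\bm x)\) — the half-phase \(e^{\pi i\bm x\cdot\bm y}\) of the symmetric normalization being exactly what turns the asymmetric phase \(e^{2\pi i\bm x\cdot\bm y}\) into a symmetric statement. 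I expect the only genuine work to be careful bookkeeping: tracking the re-indexing of the two lattice sums, the placement of the phase factors, and the exponents of \(\lambda\), \(\pi\) and \(V_{\Lambda}\) through \(\lambda\to 1/\lambda\). A minor domain point to watch is that applying \Cref{crandall} to the transformed data needs \(d-\nu\neq d\) when \(-\bm x\in(\Lambda^{\ast})^{\ast}=\Lambda\), i.e. \(\nu\neq 0\) when \(\bm x\in\Lambda\); this edge case, if not covered by the stated hypotheses, follows by continuity in \(\nu\) since both sides are meromorphic.
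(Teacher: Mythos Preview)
Your proposal is correct and follows essentially the same route as the paper: both arguments insert Crandall's representation, exploit the evenness of \(G_\mu\) together with \((\Lambda^{\ast})^{\ast}=\Lambda\), \(-\Lambda^{\ast}=\Lambda^{\ast}\), \(V_{\Lambda^{\ast}}=1/V_{\Lambda}\), and then match the two lattice sums after extracting the phase \(e^{2\pi i\bm x\cdot\bm y}\). The only cosmetic difference is that the paper simply sets \(\lambda=1\) from the start, whereas you pair \(\lambda\) with \(1/\lambda\) and use the cancellation of \(\lambda\) as a consistency check; your additional remark about the edge case \(\nu=0\), \(\bm x\in\Lambda\) (handled by continuity in \(\nu\)) is a point the paper does not make explicit.
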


\begin{proof}
We start by inserting Crandall's representation for $\lambda = 1$ in the above formula. After exchanging the first and the second sum, we obtain
\begin{align*}
\frac{V_{\Lambda}^{\nu/d}/V_{\Lambda^*}}{\Gamma((d-\nu)/2) \Gamma(\nu/2)} 
e^{\pi i \bm x \cdot \bm y}\sum_{\bm{k}\in\Lambda^{\ast}}G_{d-\nu}
(\bm{k}+\bm{y})e^{-2\pi i\bm{x}\cdot(\bm{k}+\bm{y})}
\\
+\frac{V_{\Lambda}^{\nu/d}}{\Gamma((d-\nu)/2) \Gamma(\nu/2)} 
e^{\pi i \bm x \cdot \bm y}\sum_{\bm{z}\in\Lambda}
G_{\nu}(\bm{z}-\bm{x})e^{-2\pi i\bm{y}\cdot\bm{z}}.
\end{align*}
After noticing that \(V_{\Lambda}=1/V_{\Lambda^{\ast}}\), swapping the indices \(\bm k\) and \(\bm z\)
and using that  \(G_\nu(-\,\bm \cdot)=G_\nu(\bm \cdot)\),
we have
\begin{align*}
\frac{V_{\Lambda^*}^{(d-\nu)/d}}{\Gamma((d-\nu)/2) \Gamma(\nu/2)} 
e^{-\pi i \bm x \cdot \bm y}\sum_{\bm{z}\in \Lambda^{\ast}}G_{d-\nu}
(\bm{z}-\bm{y})e^{-2\pi i\bm{x}\cdot (-\bm{z}+\bm y)}
\\
+\frac{V_{\Lambda^*}^{(d-\nu)/d}/V_{(\Lambda^*)^*}}{\Gamma((d-\nu)/2) \Gamma(\nu/2)} 
e^{-\pi i \bm x \cdot \bm y}\sum_{\bm{k}\in(\Lambda^{\ast})^{\ast}}
G_{\nu}(\bm{k}+(-\bm{x}))e^{-2\pi i\bm{y}\cdot\bm{k}}.
\end{align*}
The above expression is equal to Crandall's representation of the transformed product
\[
\frac{(V_{\Lambda^{\ast}}^{2/d}/\pi)^{(d-\nu)/2}}{\Gamma(\nu/2)}e^{-\pi i \bm x \cdot \bm y}
\zeps{\Lambda^{\ast},d-\nu}{\bm y}{-\bm x}.
\qedhere
\]
\end{proof}

The following theorem shows that the Epstein zeta function can be holomorphically extended not only in $\nu$ but also in $\bm x$ and $\bm y$. \new{To the best of our knowledge, this is the first proof of this result.} \new{The holomorphic extension} has several important consequences. First, it establishes the natural domain of definition for the Epstein zeta function with complex arguments
$\bm x$, $\bm y$ and $\nu$. Since holomorphy implies smoothness, the theorem guarantees the existence of arbitrary higher-order derivatives of the Epstein zeta function with respect to its arguments. Finally, the domain of holomorphy allows for a straightforward determination of the radius of convergence for Taylor series expansions in these arguments. \new{These results are of central importance in applications. In particular, they determine the convergence rate of quadrature rules for integral involving the Epstein zeta function in an algorithm that permits exponential speedups in computations of many-body interactions in chemistry, see \cite{buchheitEpsteinZetaMethod2025,robles-navarroExactLatticeSummations2025}. They also determine the convergence rate of a new algorithm for computing magnetic interactions between arrays of solid bodies based on derivatives of generalized Epstein zeta functions in \cite{buchheitZetaExpansionLongrange2025}.}

\begin{theorem}[\new{Holomorphic extension of the Epstein zeta function}]
\label{hol}
Let $\Lambda$ be a $d$-dimensional lattice and $\nu\in\mathds C$. We define the set $\new{D_L}\subseteq \mathds C^d$ as the  following intersection of $d$-dimensional complex cones with origins at $L\subseteq \mathds R^d$,
$$
D_L=\{\bm u\in\mathds C^d:|\Re{\bm u}-\bm z|>|\Im{\bm u}|\ \forall \bm z\in L\}.
$$
For $\bm x \notin \Lambda$, $\bm y \notin \Lambda^*$, the Epstein zeta function can be holomorphically extended to  
$$
(\nu,\bm x,\bm y)\in 
\mathds C\times D_{\Lambda}\times D_{\Lambda^*}.
$$
For $\bm x\in\Lambda$, $\bm y \notin \Lambda^*$, the Epstein zeta function can be holomorphically extended to 
$$(\nu,\bm y)\in \mathds C\times D_{\Lambda^*}$$
and for
$\bm y \in \Lambda^*$, $\bm x \notin \Lambda$, the Epstein zeta  function can be holomorphically extended to  
$$(\nu,\bm x)\in \Big( \mathds C\setminus\{d\}\Big)\times D_{\Lambda}$$
with a simple pole in $\nu = d$.
Finally, for $\bm x\in\Lambda$ and $\bm y\in\Lambda^*$, the Epstein zeta function is holomorphic in $\nu\in\mathds C\setminus\{d\}$ with a simple pole in $\nu=d$.  
\end{theorem}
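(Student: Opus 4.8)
The plan is to take Crandall's representation (\Cref{crandall}) as the defining formula for the continuation and to extract the analytic structure summand by summand. The scalar prefactor $(\lambda^2/\pi)^{-\nu/2}/\Gamma(\nu/2)$ is entire in $\nu$ — since $1/\Gamma$ is entire — and independent of $\bm x,\bm y$, so everything reduces to the two lattice sums
\[
\Sigma_1(\nu,\bm x,\bm y)=\sum_{\bm z\in\Lambda}G_\nu\!\Big(\tfrac{\bm z-\bm x}{\lambda}\Big)e^{-2\pi i\bm y\cdot\bm z},\qquad
\Sigma_2(\nu,\bm x,\bm y)=\frac{\lambda^d}{V_\Lambda}\sum_{\bm k\in\Lambda^\ast}G_{d-\nu}\!\big(\lambda(\bm k+\bm y)\big)e^{-2\pi i\bm x\cdot(\bm k+\bm y)}
\]
and to the book-keeping of the single terms $\bm z=\bm x$ and $\bm k=-\bm y$, which are the only summands that can produce a pole (through $G_\cdot(\bm 0)$).

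First I would record the geometric fact $\Re{(\bm z-\bm x)^2}=|\Re{\bm x}-\bm z|^2-|\Im{\bm x}|^2$, which shows that for any $\lambda>0$ the condition $\bm x\in D_\Lambda$ is precisely equivalent to $(\bm z-\bm x)/\lambda$ lying in the sector $D$ of \Cref{propCrandall} for every $\bm z\in\Lambda$; using $-\Lambda^\ast=\Lambda^\ast$, the condition $\bm y\in D_{\Lambda^\ast}$ is equivalent to $\lambda(\bm k+\bm y)\in D$ for every $\bm k\in\Lambda^\ast$. By \Cref{propCrandall}\,(1) every summand of $\Sigma_1$ is then jointly holomorphic on $\mathds C\times D_\Lambda\times\mathds C^d$ and every summand of $\Sigma_2$ on $\mathds C\times\mathds C^d\times D_{\Lambda^\ast}$ (once $\bm x\in D_\Lambda$ the argument of $G_\nu$ never hits $\bm 0$, and similarly for $\Sigma_2$). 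Next I would prove locally uniform convergence: on a compact subset $K$ of the relevant domain, $v=\max\{0,\Re{\nu/2}-1\}$ stays bounded and, by the above identity and compactness, $u=\pi\Re{(\bm z-\bm x)^2}/\lambda^2\ge c|\bm z|^2-C$ for constants depending only on $K$; hence for all but finitely many $\bm z\in\Lambda$ the uniform bound \Cref{propCrandall}\,(3) gives $|G_\nu((\bm z-\bm x)/\lambda)|\lesssim e^{-c|\bm z|^2}$, while $|e^{-2\pi i\bm y\cdot\bm z}|=e^{2\pi\Im{\bm y}\cdot\bm z}\le e^{2\pi|\Im{\bm y}|\,|\bm z|}$ grows at most like $e^{C'|\bm z|}$ on $K$, so the tail of $\Sigma_1$ converges absolutely and uniformly on $K$. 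By the Weierstrass convergence theorem $\Sigma_1$ is holomorphic on $\mathds C\times D_\Lambda\times\mathds C^d$; the symmetric argument (with $\bm x\leftrightarrow\bm y$, $\Lambda\leftrightarrow\Lambda^\ast$, $\nu\leftrightarrow d-\nu$) gives $\Sigma_2$ holomorphic on $\mathds C\times\mathds C^d\times D_{\Lambda^\ast}$. Intersecting domains yields the first case of the theorem, and since neither sum contains a $\bm 0$-term the continuation is genuinely entire in $\nu$ there.

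For the lattice-point cases I would isolate the $\bm 0$-terms. If $\bm x\in\Lambda$, $\Sigma_1$ acquires the summand $G_\nu(\bm 0)e^{-2\pi i\bm y\cdot\bm x}=(-2/\nu)e^{-2\pi i\bm y\cdot\bm x}$; but $(-2/\nu)/\Gamma(\nu/2)=-1/\Gamma(\nu/2+1)$ is entire, so the apparent pole at $\nu=0$ is removable and no singularity is created — the continuation remains entire in $\nu$, and holomorphic in $\bm y\in D_{\Lambda^\ast}$ whenever $\bm y\notin\Lambda^\ast$. If $\bm y\in\Lambda^\ast$, $\Sigma_2$ acquires the summand $\tfrac{\lambda^d}{V_\Lambda}G_{d-\nu}(\bm 0)=\tfrac{\lambda^d}{V_\Lambda}\cdot\tfrac{-2}{d-\nu}$, and since $(\lambda^2/\pi)^{-\nu/2}/\Gamma(\nu/2)$ is finite and nonzero at $\nu=d$ (as $d\ge1$) and this is the unique $\bm 0$-term of that type, the product acquires a simple pole at $\nu=d$ with nonzero residue $2\pi^{d/2}/(\Gamma(d/2)V_\Lambda)$, which therefore cannot cancel; this gives holomorphy in $\nu\in\mathds C\setminus\{d\}$ with a simple pole at $\nu=d$, and holomorphy in $\bm x\in D_\Lambda$ when $\bm x\notin\Lambda$. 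The last case $\bm x\in\Lambda$, $\bm y\in\Lambda^\ast$ simply combines the two observations.

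I expect the main obstacle to be making the locally uniform convergence rigorous: one must dominate the exponential growth $e^{2\pi|\Im{\bm y}|\,|\bm z|}$ of the character uniformly over compact subsets of the complex domain by the Gaussian-type decay of $G_\nu$ — with uniformity in $\nu$ entering through the bound on $v$ — and carefully split off the finitely many lattice points on which \Cref{propCrandall}\,(3) does not yet apply, arguing their holomorphy directly from \Cref{propCrandall}\,(1). The remaining points, removability of the $\nu=0$ pole and non-vanishing of the $\nu=d$ residue, are one-line computations with $s\,\Gamma(s)=\Gamma(s+1)$.
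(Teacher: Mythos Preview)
Your proposal is correct and follows essentially the same route as the paper: both arguments read holomorphy off Crandall's representation, use \Cref{propCrandall}\,(1) for the individual summands, establish locally uniform convergence via the Gaussian-type bound \Cref{propCrandall}\,(3) dominating the at-most-exponential growth of the complex characters, and then isolate the $\bm z=\bm x$ and $\bm k=-\bm y$ terms to track the removable singularity at $\nu=0$ and the simple pole at $\nu=d$. Your presentation is slightly cleaner in that you separate the domains of $\Sigma_1$ and $\Sigma_2$ before intersecting them, and you add the explicit residue $2\pi^{d/2}/(\Gamma(d/2)V_\Lambda)$ to confirm the pole at $\nu=d$ is genuine, which the paper leaves implicit.
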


\begin{proof}
Consider the set 
$W=
\mathds C\times D_\Lambda \times D_{\Lambda^\ast}$.
By
Lemma \ref{lem:propCrandall},
and Hartogs' theorem \cite[Theorem 2.2.8]{hormanderIntroductionComplexAnalysis1990},
 each summand in Crandall's representation is jointly holomorphic in $(\nu,\bm x,\bm y) \in W$. We now need to show that holomorphy is preserved for the infinite lattice sums.  
For $r>0$, we define $f_r: W\to \mathds C$,
\begin{align*}
f_r(\nu,\bm x,\bm y) =
\sum_{\substack{\bm{z}\in\Lambda \\ |\bm z|\le r}}
&G_{\nu}\Big(\frac{\bm{z}-\bm{x}}{\lambda}\Big)e^{-2\pi i\bm{y}\cdot\bm{z}}
+\frac{\lambda^d}{V_{\Lambda}}
\sum_{\substack{\bm{k}\in\Lambda^\ast \\ |\bm k|\le r}}G_{d-\nu}
(\lambda(\bm{k}+\bm{y}))e^{-2\pi i\bm{x}\cdot(\bm{k}+\bm{y})},
\end{align*}
which is holomorphic since it is the finite sum of holomorphic functions. We now show that $f_r\to f$ compact uniformly as $r \to \infty$ with $f:W\to \mathds C$,
\[
\frac{(\pi/\lambda^2 )^{\nu/2}}{\Gamma(\nu/2)} f(\nu,\bm x,\bm y) = \zeps{\Lambda,\nu}{\bm x}{\bm y}.
\]
To this end, choose $\mathcal K\subset W$ compact.  Using
Lemma \ref{lem:propCrandall} (3), we can derive uniform bounds for the upper Crandall functions as follows
 \[
 \left\vert G_\nu\left(\frac{\bm z-\bm x}{\lambda}\right)\right\vert \le e^{-\pi \mathrm{Re}((\bm z-\bm x)^2/\lambda^2)},\quad \pi \mathrm{Re}((\bm z-\bm x)^2/\lambda^2)>\max\{1,\mathrm{Re}(\nu/2)\}.
 \]
 With \[\mathrm{Re}\Big((\bm z-\bm x)^2\Big)= (\bm z-\mathrm{Re}(\bm x))^2-\mathrm{Im}(\bm x)^2\ge \big(|\bm z|-|\bm x|\big)^2-\vert \bm x\vert^2\ge \vert \bm z/2\vert^2\] 
 for all $\bm z\in \Lambda$ with $\vert \bm z \vert\ge 4 \sup_{\mathcal K}\vert \bm x\vert$, we find
 \[
 \left\vert G_\nu\left(\frac{\bm z-\bm x}{\lambda}\right)\right\vert \le e^{-\pi \vert \bm z/(2\lambda)|^2 },\quad \pi \vert \bm z/(2\lambda)\vert^2> \sup_{\mathcal K} \max\{1,\mathrm{Re}(\nu/2)\}.
 \]
 Similarly
 \[
 \left\vert G_{d-\nu}\left(\lambda(\bm k+\bm y)\right)\right\vert \le e^{-\pi \vert \lambda \bm k/2|^2 },\quad \pi \vert \lambda \bm k/2|^2 > \sup_{\mathcal K} \max\{1,\mathrm{Re}((d-\nu)/2)\},
 \]
 for all $\bm k\in \Lambda^\ast$ with $\vert \bm k \vert\ge 4 \sup_{\mathcal K}\vert \bm y\vert$. Finally, we have
\begin{align*}
&\sup_{\mathcal K}\vert e^{-2\pi i \bm y\cdot \bm z}\vert \le e^{c_1 \vert \bm z\vert},\quad  \sup_{\mathcal K}\vert e^{-2\pi i \bm x\cdot (\bm k+\bm y)}\vert \le    c_2 e^{c_3 \vert \bm k\vert}.
\end{align*}
For $r$ chosen large enough to fulfill the conditions for the above bounds, we find
\[
\sup_{\mathcal K}\vert f-f_r| \le \sum_{\substack{\bm{z}\in\Lambda \\ |\bm z|>r}} e^{-\pi (\bm z/(2\lambda))^2}e^{c_1 \vert \bm z\vert}+\frac{\lambda^d}{V_\Lambda}\sum_{\substack{\bm{k}\in\Lambda^\ast \\ |\bm k|>r}} e^{-\pi (\lambda \bm k/2)^2} c_2 e^{c_3 \vert \bm k\vert} \to 0,
\]
as $r\to \infty$.
Convergence is obtained from 
Lemma \ref{lattice-sum-converges} using that the superexponential decay of the Gaussian dominates the exponential and that the resulting summand falls off faster than any power-law. From compact uniform convergence of $f_r\to f$ then follows holomorphy of $f$ and thus of the Epstein zeta function on $W$, see \cite[Theorem 10.28]{rudin1987real}, which is a direct consequence of Morera's theorem. 

Now consider the case $\bm x\in \Lambda$ and $\bm y \in D_{\Lambda^\ast}$. The proof then proceeds in analogy to before, where only the term  $\bm z=\bm x$ in the first sum requires further attention. Recalling that $G_\nu(\bm 0)=-2/\nu$, we find
$$
\frac{(\pi/\lambda^2 )^{\nu/2}}{\Gamma(\nu/2)} G_\nu(\bm 0) e^{-2\pi i \bm y\cdot \bm z} = -\frac{(\pi/\lambda^2 )^{\nu/2}}{\Gamma(\nu/2+1)}e^{-2\pi i \bm y\cdot \bm z},$$
as $1/\Gamma(s+1)=1/(s\Gamma(s))$, for $s\in \mathds C$.
The apparent singularity at \(\nu=0\) is therefore removable. Thus the Epstein zeta function
\new{can be holomorphically extended to}
in $(\nu,\bm y) \in \mathds C\times D_{\Lambda^\ast}$.

In case that $\bm y\in\Lambda^*$, we find that the term $\bm k = -\bm y$ is of the form
\[
\frac{(\pi/\lambda^2 )^{\nu/2}}{\Gamma(\nu/2)}
\frac{\lambda^d}{V_{\Lambda}}
\frac{2}{(\nu-d)}e^{-2\pi i\bm{x}\cdot(\bm{k}+\bm{y})},
\]
which exhibits a simple pole at $\nu = d$. Thus for $\bm x\in \Lambda$, the Epstein zeta function 
\new{can be holomorphically extended to}
$(\nu, \bm y)\in (\mathds C\setminus \{d\})\times D_{\Lambda^\ast}$. Finally, the discussion of the $\bm z = \bm x$ and $\bm k = -\bm y$ terms above also shows that for $\bm x \in \Lambda$ and $\bm y\in \Lambda^\ast$, the Epstein zeta function is holomorphic in $\nu \in \mathds C\setminus\{d\}$ with a simple pole at $\nu = d$.
\end{proof}

\new{Since the holomorphic extension agrees with the Epstein zeta function on its original domain of definition, its uniqueness follows from the identity theorem. Hence, in the following, the Epstein zeta function and related functions for complex arguments are to be understood in terms of their respective holomorphic extensions.}

\new{
Note that the domain of holomorphy of the Epstein zeta function implies uniform exponential convergence of the Taylor series in $\bm x$ on every ball that does not intersect $\Lambda$, which is used to determine the rate of convergence of our recent algorithm in \cite[Thm.~2.3]{buchheitZetaExpansionLongrange2025}.
}

\subsection{Singularities and regularisation}

\begin{figure}
    \centering
    \includegraphics[width=1\linewidth]{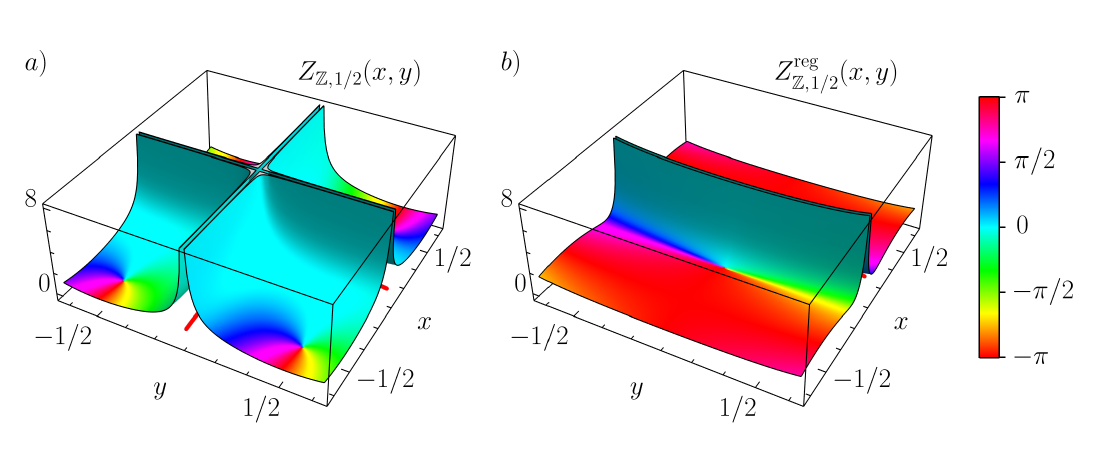}
    \caption{
    One-dimensional Epstein zeta function (a) and its regularised counterpart (b) for $\nu=1/2$ and $\Lambda=\mathds Z$,
    where the height of the surface encodes the absolute function values and the color encodes the arguments of the complex function values according to the cyclic colormap shown on the right.
    The one-dimensional Epstein zeta function exhibits discontinuities at points in the (reciprocal) lattice, in particular at $x=y=0$ (a). The regularised Epstein zeta function as a function of \(y\) is analytic in the elementary cell of the reciprocal lattice $-1/2\le y\le 1/2$ and agrees with the Epstein zeta function at $y=0$ (b).}
    \label{fig:epsteinZetaReg}
\end{figure}

The Epstein zeta function exhibits various singularities in its arguments. Understanding these singularities is of crucial importance in numerical applications that involve the Epstein zeta function as well as in physical applications. 
The singularities in $\bm x$ can readily be deduced from the complex extension in $\bm x$ of the summands of the defining lattice sum,
\[
\frac{1}{\big((\bm z-\bm x)^2\big)^{\nu/2}}\to\infty,\qquad \bm x\to \bm z,
\]
for $\mathrm{Re}(\nu)>0$. 
\new{The appearance of singularities in $\bm x\in \Lambda$ also lead to singularities in the reciprocal lattice \(\bm y\in\Lambda^*\) due to the functional equation.}
The singularity at \(\bm y=\bm 0\) is of particular interest in numerous applications of the Epstein zeta function, such as in anomalous quantum spin-wave dispersion relations, see Section \ref{sec:dispersion_relation}, 
or in superconductors with long-range interactions, see \cite{buchheit2023exact}.
For example,
investigating the latter leads to singular integrals of the form
\[\int_{E^{\ast}_{\Lambda}}\zeps{\Lambda,\nu}{\bm 0}{\bm y}f(\bm y)\,\mathrm{d}\bm y,\]
for some sufficiently regular function \(f\). In order to evaluate the integral precisely, specialised quadrature rules are required that take into account the particular form of the singularity at $\bm y = \bm 0$, see \cite{duffy1982quadrature}. \new{The singularity at $\bm y \in \Lambda^\ast$ corresponds to the Rayleigh--Wood anomaly that arises in quasi-periodic lattice sums and related Green functions in wave scattering \cite{denlinger2017fast,wood1902xlii,bruno2020evaluation}. The following treatment shows how to subtract singularities for general parameter choices while avoiding catastrophic cancellation.} 

We \new{first} show that the  Epstein zeta function can be decomposed into a power-law singularity and a regularised function that is holomorphic around $\bm y = \bm 0$. 
This is depicted for the integer-lattice and $\nu=1/2$ in 
Figure \ref{fig:epsteinZetaReg}. 
In the following, we derive a modified Crandall representation that allows for the investigation of the holomorphic part of the Epstein zeta function.
The resulting regularised Epstein zeta function plays a key role in the recently discovered singular Euler--Maclaurin expansion \cite{buchheit2022singular}, a generalisation of the classical Euler--Maclaurin summation formula to higher-dimensional lattice sums and physically relevant power-law singularities. 

We define the regularised Epstein zeta function as follows.

\begin{definition}[Regularised Epstein zeta function]
\label{def:epstienreg}
Let $\Lambda$ be a $d$-dimensional lattice, let $\bm x \in\mathds{R}^d$, let
$\bm y \in (\mathds R^d\setminus\Lambda^*)\cup\{\bm 0\}$ and let $\nu \in \mathds{C}$.
We define the regularised Epstein zeta function via 
\[\zepsr{\Lambda, \nu}{\bm x}{\bm y} = e^{2 \pi  i  \bm x \cdot \bm y} \zeps{\Lambda, \nu}{\bm x}{\bm y} - \frac{1}{V_\Lambda} \hat{s}_\nu(\bm y),\qquad \bm y\neq 0,\]
and continuously extended to $\bm y = 0$. Here $\hat s_\nu$ denotes the distributional Fourier transform of $s_\nu(\cdot)=\vert \bm \cdot\vert^{-\nu}$ which for $\nu \not\in (d+2\mathds N_0)$ is given by
\[\hat{s}_\nu(\bm y) = \frac{\pi^{\nu/2}}{\Gamma(\nu/2)}\Gamma\big((d-\nu)/2\big)  (\pi \bm y^2)^{(\nu - d)/2}.
\]
For $\nu\in d+ 2k$ and $k\in\mathds N_0$, this Fourier transform is uniquely defined up to a polynomial of order $2k$, see \cite{hormander2003analysisI}. We adopt the choice
\[
\hat s_{d+2k}(\bm y)= \frac{\pi^{k+d/2}}{\Gamma(k+d/2)}\frac{(-1)^{k+1}}{k!} ( \pi \bm y^2 )^{k} \log (\pi  \bm y^{2}).
\]
\end{definition}

The Crandall representation for the regularised Epstein zeta function
enables the analysis of its analytic properties and permits its efficient computation
without cancellation error.

\begin{theorem}[Crandall representation for the regularised Epstein zeta function]
\label{crandall-reg}
Let $\Lambda$ be a d-dimensional lattice, let $\bm x \in\mathds{R}^d$, let
$\bm y \in (\mathds R^d\setminus\Lambda^*)\cup\{\bm 0\}$ and let $\nu \in \mathds{C}$.
Then for $\lambda > 0$,
\begin{align*}
\zepsr{\Lambda,\nu}{\bm x}{\bm y}
=
&\frac{(\pi/\lambda^2)^{\nu/2}}{\Gamma(\nu/2)}
\Bigg[
\sum_{\bm{z}\in\Lambda}
\;G_{\nu}\Big(\frac{\bm{z}-\bm{x}}{\lambda}\Big)e^{-2\pi i\bm{y}\cdot(\bm{z}-\bm{x})} 
\\
&\quad
+\frac{\lambda^d}{V_{\Lambda}}
\Big[
G_{d-\nu,\lambda}^{\rm reg}(\bm y)+
\sum_{\substack{\bm{k}\in\Lambda^{\ast}\\ \bm k\neq \bm 0\,\,}}G_{d-\nu}
(\lambda(\bm{k}+\bm{y}))e^{-2\pi i \bm{x}\cdot \bm{k}}
\Big]\Bigg]
\end{align*}
where the regularised upper Crandall function is defined as
\[
G_{\nu,\lambda}^{\rm reg}(\bm y)
=
G_{\nu}(\lambda\bm y)-
\lambda^{-\nu}\pi^{-(d-\nu)/2}\Gamma((d-\nu)/2)\hat{s}_{d-\nu}(\bm y).
\]
If \(\nu\notin -2\mathds N_0\), we have
$$G_{\nu,\lambda}^{\rm reg}(\bm y)=
-g_{\nu}(\lambda \bm y) 
$$
and if \(\nu=-2k\) for \(k\in\mathds N_0\) it holds that
\[G^{\rm reg}_{\nu,\lambda}(\bm y)
=\frac{(-1)^k}{k!}(H_k-\gamma -\lambda^{2k}\log\lambda^2)(\pi \bm y^2)^k-\sum_{\substack{n=0 \\ n\neq k}}^{\infty}\frac{(-\pi \bm y^2)^n}{(n-k)n!}.
\]
Here \(\gamma\) is the Euler–-Mascheroni constant and \(H_k\) is the \(k\)-th harmonic number.
\end{theorem}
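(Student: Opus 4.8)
The plan is to start from the ordinary Crandall representation in \Cref{crandall} with the \(\bm y\)-summand written in the symmetric form \(e^{-2\pi i\bm y\cdot(\bm z-\bm x)}\) (which differs from the statement of \Cref{crandall} only by the overall factor \(e^{-2\pi i\bm y\cdot \bm x}\), accounting for the prefactor \(e^{2\pi i\bm x\cdot\bm y}\) in \Cref{def:epstienreg}), then split off the \(\bm k=\bm 0\) term from the reciprocal sum. By \Cref{gn-fourier} together with the fundamental relation in \Cref{propCrandall}\,(2), the \(\bm k=\bm 0\) term contributes \(\tfrac{(\lambda^2/\pi)^{-\nu/2}}{\Gamma(\nu/2)}\tfrac{\lambda^d}{V_\Lambda}G_{d-\nu}(\lambda\bm y)\); on the other hand the distributional Fourier transform \(\hat s_\nu(\bm y)\) from \Cref{def:epstienreg} is, up to the explicit prefactor \(\lambda^{-\nu}\pi^{-(d-\nu)/2}\Gamma((d-\nu)/2)\), exactly the ``full power law'' piece that the regularised upper Crandall function subtracts. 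Matching constants shows that subtracting \(\tfrac{1}{V_\Lambda}\hat s_\nu(\bm y)\) in \Cref{def:epstienreg} is precisely the replacement \(G_{d-\nu}(\lambda\bm y)\to G_{d-\nu,\lambda}^{\rm reg}(\bm y)\), which gives the displayed representation. This reduces everything to the two closed-form evaluations of \(G_{\nu,\lambda}^{\rm reg}(\bm y)\).

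For \(\nu\notin -2\mathds N_0\): here \(\Gamma(\nu/2)\) is finite and \(\hat s_{d-\nu}(\bm y)=\tfrac{\pi^{(d-\nu)/2}}{\Gamma((d-\nu)/2)}\Gamma(\nu/2)(\pi\bm y^2)^{-\nu/2}\). Plugging this into the definition of \(G_{\nu,\lambda}^{\rm reg}\) collapses the subtracted term to \(\lambda^{-\nu}\Gamma(\nu/2)(\pi\bm y^2)^{-\nu/2}=\Gamma(\nu/2)(\pi(\lambda\bm y)^2)^{-\nu/2}\lambda^{-\nu}\cdot\lambda^{\nu}\); after bookkeeping one is left with \(G_{\nu}(\lambda\bm y)-(\pi(\lambda\bm y)^2)^{-\nu/2}\Gamma(\nu/2)\), and by \Cref{defCrandallFuncs} this equals \(-\gamma(\nu/2,\pi(\lambda\bm y)^2)/(\pi(\lambda\bm y)^2)^{\nu/2}=-g_\nu(\lambda\bm y)\). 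This is a short direct computation once the prefactors are tracked carefully.

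For \(\nu=-2k\), \(k\in\mathds N_0\): now \(1/\Gamma(\nu/2)=1/\Gamma(-k)=0\), so the naive subtracted term in \(G_{\nu,\lambda}^{\rm reg}\) must be interpreted as a limit \(\nu\to -2k\), and the log-term choice of \(\hat s_{d+2k}\) in \Cref{def:epstienreg} is exactly what one obtains from differentiating the power-law branch. The strategy is to write \(G_{-2k}(\lambda\bm y)=\Gamma(-k,\pi(\lambda\bm y)^2)/(\pi(\lambda\bm y)^2)^{-k}\) and expand via \(\Gamma(-k,w)=\Gamma(-k)-\gamma(-k,w)\) — or more cleanly, start from the entire function \(\gamma^*\) and the series \(\gamma^*(s,w)=e^{-w}\sum_{n\ge0}w^n/\Gamma(s+n+1)\), set \(w=\pi(\lambda\bm y)^2\), and take \(s\to -k\). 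The term \(n=k\) is where \(1/\Gamma(s+n+1)=1/\Gamma(s+k+1)\) has a removable coincidence producing, after an \(\ell'\)Hôpital/derivative-of-\(1/\Gamma\) computation, the factor \((H_k-\gamma)\); the \(\log\lambda^2\) piece comes from expanding \(\lambda^{-\nu}=\lambda^{2k}\) against the branch of \((\pi\bm y^2)^{(\nu-d)/2}\) near \(\nu=-2k\). Collecting the \(n\ne k\) terms gives the stated sum \(-\sum_{n\ne k}(-\pi\bm y^2)^n/((n-k)n!)\) and the \(n=k\) term gives \(\tfrac{(-1)^k}{k!}(H_k-\gamma-\lambda^{2k}\log\lambda^2)(\pi\bm y^2)^k\).

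The main obstacle is the \(\nu=-2k\) case: one must carefully justify the limit \(\nu\to -2k\) in \(G_{\nu,\lambda}^{\rm reg}\), reconcile the three vanishing/singular pieces (\(1/\Gamma(\nu/2)\to0\), \(\Gamma((d-\nu)/2)\) finite, and the \(\log\) arising from the coincident exponent), and confirm that the particular \(\log(\pi\bm y^2)\) normalization in \Cref{def:epstienreg} is the one consistent with this limit — in other words, that \(\hat s_{d+2k}\) was \emph{defined} to make the regularised Crandall representation hold. Everything else is the bookkeeping of \(\lambda\)- and \(\pi\)-prefactors plus the standard series for \(\gamma^*\) and the expansion \(1/\Gamma(z)=z+\gamma z^2+O(z^3)\) near \(z=0\) (equivalently \(1/\Gamma(-k+\epsilon)=(-1)^k k!\,\epsilon(1+(H_k-\gamma)\epsilon+O(\epsilon^2))\)), which I would invoke rather than rederive.
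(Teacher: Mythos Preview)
Your overall strategy matches the paper's: multiply the ordinary Crandall representation by $e^{2\pi i\bm x\cdot\bm y}$, isolate the $\bm k=\bm 0$ reciprocal summand, and observe that subtracting $V_\Lambda^{-1}\hat s_\nu(\bm y)$ amounts precisely to replacing $G_{d-\nu}(\lambda\bm y)$ by $G_{d-\nu,\lambda}^{\rm reg}(\bm y)$. For $\nu\notin -2\mathds N_0$ your computation via the fundamental relation of \Cref{propCrandall}\,(2) is exactly what the paper does.

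The one place you make life harder than necessary is the case $\nu=-2k$. You frame it as a delicate limit $\nu\to-2k$ requiring the series for $\gamma^*$ and the expansion $1/\Gamma(-k+\epsilon)=(-1)^k k!\,\epsilon(1+(H_k-\gamma)\epsilon+\dots)$. But no limit is needed: the definition of $G_{\nu,\lambda}^{\rm reg}$ is explicit at $\nu=-2k$ because $\hat s_{d+2k}$ is \emph{given} by the logarithmic formula in \Cref{def:epstienreg}. The paper simply writes
\[
G_{-2k,\lambda}^{\rm reg}(\bm y)=(\pi\lambda^2\bm y^2)^k\Big(\Gamma(-k,\pi\lambda^2\bm y^2)+\frac{(-1)^k}{k!}\big(\log(\pi\lambda^2\bm y^2)-\log\lambda^2\big)\Big)
\]
and then inserts the classical series \cite[Eq.~(5.1.12)]{abramowitz}
\[
\Gamma(-k,z)=\frac{(-1)^k}{k!}\big(H_k-\gamma-\log z\big)-z^{-k}\sum_{\substack{n\ge 0\\ n\ne k}}\frac{(-z)^n}{(n-k)\,n!},
\]
at $z=\pi\lambda^2\bm y^2$; the $\log z$ cancels against the added log term and the stated formula drops out in one line. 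Your limiting argument would of course reproduce this series, but quoting it directly is both shorter and avoids the bookkeeping you flag as the ``main obstacle''.
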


\begin{proof}
We rewrite the regularised Epstein zeta by subtracting the singularity from the \(\bm k = \bm 0\) summand in the second sum of Crandall's representation.
For ${d-\nu\notin (-2\mathds N_0)}$,
$$
G_{d-\nu,\lambda}^{\rm reg}(\bm y)
=G_{d-\nu}(\lambda \bm y) - \frac{\Gamma((d-\nu)/2)}{(\pi \lambda^2 \bm y^2)^{(d-\nu)/2}}
$$
is given in terms of the lower Crandall function by the 
fundamental relation \cite[Eq. (6.5.3)]{abramowitz}.
If $d-\nu=-2k$ for some $k\in\mathds N_0$, the representation for
$$
G_{d-\nu,\lambda}^{\rm reg}(\bm y)=
(\pi\lambda^2\bm y^2)^k\Big(\Gamma(-k,\pi\lambda^2\bm y^2)
+\frac{(-1)^{k}}{k!} (\log (\pi\lambda^2\bm y^2)-\log\lambda^2)\Big)
$$
is obtained, by
applying the series representation
\cite[Eq. (5.1.12)]{abramowitz}
$$\Gamma(-k,z)=
\frac{(-1)^{k}}{k!}(H_k-\log z)-\sum_{\substack{n=0 \\ n \neq k}}^{\infty}
\frac{(-z)^n}{(n-k)k!}
$$
see \cite[Eq. (5.1.12)]{abramowitz}
at \(z=\pi\lambda^2\bm y^2\).
\end{proof}

\new{In complete analogy, further singularities in $\bm y$, and similarly in $\bm x$, can be removed by replacing an upper Crandall function by its regularized version.}

The regularised Crandall function as introduced in Theorem \ref{crandall-reg} is entire in $\bm z$ for every fixed exponent $\nu\in\mathds C$.

\begin{lemma}
\label{g-reg-an}
Let $\nu \in \mathbb{C}$ and let $\lambda >0$. Then, $G_{\nu,\lambda}^{\rm reg}(\bm z)$ 
\new{can be holomorphically extended to} $\bm z\in\mathds C^d$.
\end{lemma}

\begin{proof}
Assume $\nu \notin -2\mathbb{N}_0$, then $G_{\nu,\lambda}^{\rm reg}(\bm z)$
is holomorphic in $\bm z\in\mathds C^d$
by 
Lemma \ref{lem:propCrandall} (1). 
Now, let $\nu =-2 k$ for $k \in \mathbb{N}_0$ and
\[f_{\lambda}(w)=\frac{(-1)^k}{k!}(H_k-\gamma -\lambda^{2k}\log\lambda^2)w^k-\sum_{\substack{n=0 \\ n\neq k}}^{\infty}\frac{(-w)^n}{(n-k)n!}.
\]
Since
a power series is holomorphic if and only if it converges absolutely and
\[G_{\nu,\lambda}^\mathrm{reg}(\bm z)=f_{\lambda}( \pi \bm z^2),\]
it suffices to show that 
$$\sum_{\substack{n=0 \\ n\neq k}}^{\infty}\frac{(-w)^n}{(n-k)n!}
$$
converges absolutely.
The absolute value of the summands for $n \neq k$ is bounded by
\[\frac{|w|^n}{|n-k|n!} \leq \frac{|w|^n}{n!}\]
and thus, the exponential function is an absolute majorant of the power series associated with
\[\frac{(-1)^k}{k!}(H_k-\gamma -\lambda^{2k}\log\lambda^2)w^k- f_{\lambda}(w).
\qedhere
\]
\end{proof}

The regularised Epstein zeta function can be holomorphically extended in $\bm y$ around zero.

\begin{theorem}[\new{Holomorphic extension} of the regularised Epstein zeta function]
Let $\Lambda$ be a $d$-dimensional lattice. Then the regularised Epstein zeta function can be holomorphically extended to \[
(\nu,\bm x, \bm y)\in \big(\mathds C\setminus (d+2\mathds N_0)\big)\times D_\Lambda\times D_{\Lambda^\ast\setminus\{\bm 0\}}.
\]
For $\nu\in (d+2 \mathds N_0)$, the regularised Epstein zeta function \new{can be holomorphically extended to} \[(\bm x, \bm y) \in D_\Lambda\times D_{\Lambda^\ast\setminus\{\bm 0\}}.
\]
Finally, for $\bm x\in \Lambda$, the regularised Epstein zeta function \new{can be holomorphically extended to}
\[
(\nu,\bm y) \in \big(\mathds C\setminus (d+2\mathds N_0)\big)\times D_{\Lambda^\ast\setminus\{\bm 0\}}.
\]
\end{theorem}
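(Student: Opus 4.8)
The plan is to argue directly from the modified Crandall representation of \Cref{crandall-reg}, following the template of the proof of \Cref{hol}: show that each individual term in that representation is jointly holomorphic on the claimed domain, that the two occurring lattice sums converge compact uniformly there, and then invoke the standard fact that a locally uniform limit of holomorphic functions is holomorphic (Morera's theorem, exactly as in the proof of \Cref{hol}). The overall prefactor $(\lambda^2/\pi)^{-\nu/2}/\Gamma(\nu/2)$ is entire in $\nu$, so it never obstructs holomorphy.

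First I would handle the sum $\sum_{\bm z\in\Lambda}G_{\nu}\big((\bm z-\bm x)/\lambda\big)e^{-2\pi i\bm y\cdot(\bm z-\bm x)}$. The hypothesis $\bm x\in D_\Lambda$ says precisely that $|\Re\bm x-\bm z|>|\Im\bm x|$ for every $\bm z\in\Lambda$, i.e. $(\bm z-\bm x)/\lambda$ lies in the sector $D$ of \Cref{propCrandall}; hence every summand is jointly holomorphic in $(\nu,\bm x,\bm y)\in\mathds C\times D_\Lambda\times\mathds C^d$ by \Cref{propCrandall}\,(1) and Hartogs' theorem. Compact uniform convergence on a compact $\mathcal K$ inside the claimed domain is verbatim the estimate from the proof of \Cref{hol}: the uniform bound \Cref{propCrandall}\,(3) yields $|G_\nu((\bm z-\bm x)/\lambda)|\le e^{-\pi|\bm z/(2\lambda)|^{2}}$ for $|\bm z|$ large, which dominates the exponential factor $|e^{-2\pi i\bm y\cdot(\bm z-\bm x)}|\le e^{c|\bm z|}$, so the tail is summable by \Cref{lattice-sum-converges}.

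Next is the punctured sum $\sum_{\bm k\in\Lambda^\ast,\,\bm k\neq\bm 0}G_{d-\nu}(\lambda(\bm k+\bm y))e^{-2\pi i\bm x\cdot\bm k}$, and this is exactly where the cone domain $D_{\Lambda^\ast\setminus\{\bm 0\}}$ is used: for $\bm k\neq\bm 0$ one has $-\bm k\in\Lambda^\ast\setminus\{\bm 0\}$, so $\bm y\in D_{\Lambda^\ast\setminus\{\bm 0\}}$ forces $|\bm k+\Re\bm y|>|\Im\bm y|$, i.e. $\lambda(\bm k+\bm y)\in D$; thus $G_{d-\nu}(\lambda(\bm k+\bm y))$ is jointly holomorphic in $(\nu,\bm y)$ by \Cref{propCrandall}\,(1), while $e^{-2\pi i\bm x\cdot\bm k}$ is entire in $\bm x$, and $D_{\Lambda^\ast\setminus\{\bm 0\}}$ is an open neighbourhood of $\bm 0$ — which is the whole point of the regularisation. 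Convergence is again immediate from \Cref{propCrandall}\,(3) and \Cref{lattice-sum-converges}. The omitted $\bm k=\bm 0$ contribution is replaced by $\tfrac{\lambda^d}{V_\Lambda}G^{\rm reg}_{d-\nu,\lambda}(\bm y)$, and this single term is the \emph{only} source of the puncture at $d+2\mathds N_0$: for $\nu\notin d+2\mathds N_0$ it equals $-g_{d-\nu}(\lambda\bm y)$, jointly holomorphic in $(\nu,\bm y)\in(\mathds C\setminus(d+2\mathds N_0))\times\mathds C^d$ by \Cref{propCrandall}\,(1); for a fixed $\nu=d+2k$ it is the explicit series in $\pi\bm y^2$ of \Cref{crandall-reg}, whose coefficients $1/((n-k)n!)$ decay superexponentially, hence it is entire in $\bm y$, but the definition \Cref{def:epstienreg} switches to the logarithmic regularisation at these isolated $\nu$, so joint holomorphy in $\nu$ fails exactly there.

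Assembling the three pieces gives joint holomorphy on $(\mathds C\setminus(d+2\mathds N_0))\times D_\Lambda\times D_{\Lambda^\ast\setminus\{\bm 0\}}$; restricting to a fixed $\nu\in d+2\mathds N_0$ gives holomorphy in $(\bm x,\bm y)$. For the last assertion, $\bm x\in\Lambda$, the only additional point is the summand $\bm z=\bm x$ in the first sum, where $G_\nu(\bm 0)=-2/\nu$; after multiplying by the prefactor, this term becomes $-(\lambda^2/\pi)^{-\nu/2}/\Gamma(\nu/2+1)$, which is entire in $\nu$, so the apparent pole at $\nu=0$ is removable — exactly the argument from the proof of \Cref{hol} — yielding holomorphy in $(\nu,\bm y)\in(\mathds C\setminus(d+2\mathds N_0))\times D_{\Lambda^\ast\setminus\{\bm 0\}}$. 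I expect the main obstacle to be pure bookkeeping: verifying that $G^{\rm reg}_{d-\nu,\lambda}$ genuinely extends holomorphically in $\bm y$ across $\bm y=\bm 0$ in the exceptional logarithmic case, and keeping straight on which of the nested cone domains $D_\Lambda$ and $D_{\Lambda^\ast\setminus\{\bm 0\}}$ each building block lives; once that is set up, the convergence estimates are identical to those already carried out for \Cref{hol}.
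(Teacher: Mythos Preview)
Your proposal is correct and follows essentially the same route as the paper's proof: both work directly from the modified Crandall representation of \Cref{crandall-reg}, reuse the summand-wise holomorphy and compact uniform convergence already established in the proof of \Cref{hol}, and isolate the regularised term $G^{\rm reg}_{d-\nu,\lambda}(\bm y)$ as the sole source of the puncture at $\nu\in d+2\mathds N_0$. The paper is more terse, simply citing \Cref{g-reg-an} for the entirety in $\bm y$ at the exceptional $\nu$, whereas you spell out the superexponential decay of the coefficients, but the argument is the same.
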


\begin{proof}
In proof of
Theorem \ref{hol}, we have shown holomorphy of every summand in Crandall's representation as well as compact uniform convergence. Thus, the domain of holomorphy of the Epstein zeta function corresponds to the intersection of the domains of holomorphy of all summands.

Removing the $\bm k=\bm 0$ summand 
$G_{d-\nu}(\lambda(\bm 0+\bm y))$
in the second sum in Crandall's representation
extends holomorphy in $\bm y$ to $D_{\Lambda^\ast\setminus \{\bm 0\}}$ and removes the potential pole at $\nu = d$ in case that $\bm y = 0$.
The regularised Epstein zeta function is then obtained by adding the regularised Crandall function
$G_{d-\nu,\lambda}^{\rm reg}(\bm y)$,
which is holomorphic in 
\[
(\nu,\bm y) \in (\mathds C\setminus (-2\mathds N_0))\times \mathds C^d,
\]
where the exclusions in $\nu$ are due to the poles of the gamma function.
In case that $\nu \in (d+2\mathds N_0)$,
the resulting summand is entire in $\bm y\in\mathds C^d$ by 
Lemma \ref{g-reg-an}. The intersection of the domains of holomorphy then yields the desired result. 
\end{proof}

The holomorphy of the
regularised Epstein zeta function
ensures exponential convergence of an interpolating polynomial with respect to the number of interpolation points \cite{berrutBarycentricLagrangeInterpolation2004}. By subsequently adding back the singular term, one obtains an efficient method for precomputing the full Epstein zeta function.

\section{Algorithm}
\label{sec:algorithm}
In this section, we present an algorithm for efficiently computing the Epstein zeta for arbitrary real parameters. While Crandall's representation forms the basis for computing the Epstein zeta function in terms of a sum over superexponentially decaying function values, many numerical challenges remain. 
In this chapter, we present \hyperref[alg]{Algorithm 1}, which evaluates the Epstein zeta function for arbitrary real parameters to full precision for dimensions $d\le 10$. The discussion of the algorithm is structured as follows.
The preprocessing of the parameters, especially the projection of the vectors to the elementary lattice cell, is discussed in 
Section \ref{sec:preproc}. 
Near poles and zeros of the gamma and Crandall functions, explicit evaluations of the Epstein zeta function as discussed in 
Section \ref{sec:special} are needed to guarantee a stable evaluation.
In 
Section \ref{sec:syms} we show, how the symmetries of the Epstein zeta function simplify its evaluation.
In 
Section \ref{sec:abwert}, our choice of the summation cutoff is justified, and techniques for reducing roundoff errors are discussed.
Finally, we outline the computation of the upper incomplete gamma function in 
Section \ref{sec:ogammaalg}.

\begin{algorithm}
\label{alg}
\caption{Computation of the Epstein zeta function.}
\begin{algorithmic}

\State \textbf{Input: }$d\in\mathds{N}_+$, $A\in \mathds{R}^{d\times d}$ regular, $\bm{x},\bm{y}\in\mathds{R}^d$, $\nu\in\mathds{R}$, \new{$r>0$}

\algskip

\Statex \textbf{1. Rescale lattice matrix and vectors}

\State   $a =V_{\Lambda}^{1/d}, \quad A \gets A/a,\quad \bm{x} \gets \bm{x}/a,\quad \bm{y} \gets a \bm{y}$

\algskip
    
\Statex \textbf{2. Project $\bm{x}$ and $\bm{y}$ to elementary lattice cells}

\State ${\bm x}' = A^{-1} {\bm x},\quad {\bm y}' = A^T {\bm y}$
    
\For{\(i = 1,\ldots,d\)}

    \State $x_i' \gets ( (x_i'+1/2) \mod{1} )-1/2$
        
    \State $y_i' \gets ( (y_i'+1/2) \mod{1} )-1/2$

\EndFor

\State ${\bm x}' \gets A {\bm x}', \quad {\bm y}' \gets A^{-T}  \bm y'$

\algskip

\Statex \textbf{3. Handle special cases}

\State \textbf{if }$\nu = d\,\land\, \bm{y}' = \bm 0\ \land$ regularised $=$ False \textbf{ then }\Return NaN \textbf{ end if}

\If{$\nu=0$}

    \State \textbf{ if } $\bm x'=\bm 0$
    \textbf{ then } 
    \Return $- \exp({-2 \pi i \bm{x} \cdot \bm{y}'})$ 
    \textbf{ end if}
    
    \State  \Return $0$

\EndIf

\State \textbf{if }$\nu\in -2\mathds N_+$\Return $0$\textbf{ end if}

\algskip

\Statex \textbf{4. Kahan summation in real and reciprocal space \new{(used for all additions within this step)}}

\State $\textnormal{sum\_real} =0, \quad \textnormal{sum\_reciprocal} =0$

\If{regularised \(=\) False}

    \For{$\bm z \in A \mathds Z^d: \vert \bm z-\bm x' \vert \le r$}
    
        \State \textnormal{sum\_real} $\pluseq G_{\nu}(\bm z-\bm x')\exp(-2\pi i\bm z\cdot \bm y')$
    
    \EndFor
    
    \For{$\bm k \in A^{-T}\mathds Z^d: \vert \bm k+\bm y' \vert\le r$}
    
        \State \textnormal{sum\_reciprocal} $\pluseq G_{d-\nu}(\bm k + \bm y')\exp(2 \pi i \bm x' \cdot (\bm k +\bm y'))$

    \EndFor
    
    \State \textnormal{sum\_real} $\asteq\exp(-2\pi i \bm y\cdot (\bm x - \bm x'))$
    
    \State \textnormal{sum\_reciprocal} $\asteq\exp(-2\pi i \bm y\cdot (\bm x - \bm x'))$

\ElsIf{regularised $=$ True}

    \For{$\bm z \in A\mathds Z^d: \vert \bm z-\bm x' \vert\le r$}
        
        \State \textnormal{sum\_real} $\pluseq 
        G_{\nu}(\bm z-\bm x')
        \exp(-2\pi i(\bm z\cdot \bm y'-\bm y\cdot \bm x'))$

    \EndFor
    
    \For{$\bm k \in (A^{-T}\mathds Z^d)\setminus \{\bm y\}: \vert \bm k+\bm y' \vert\le r$}
    
        \State \textnormal{sum\_reciprocal} $\pluseq G_{d-\nu}(\bm k + \bm y')\exp(2 \pi i \bm x \cdot (\bm k -(\bm y- \bm y')))$
        
    \EndFor
        
    \State \textnormal{sum\_reciprocal} $\pluseq G_{d-\nu,\lambda}^{\mathrm{reg}}(\bm y)$
    
\EndIf

\algskip

\Statex \textbf{5. Postprocessing}

\State $\textnormal{result} =a^{-\nu}  \pi^{\nu/2} (\textnormal{sum\_real}+\textnormal{sum\_fourier})/\Gamma(\nu/2)$

\If{regularised $=$ True $\land$ $\nu=d+2k$ for $k\in\mathds N_0$}

    $\textnormal{result}\pluseq \frac{1}{V_{\Lambda}}\frac{\pi^{k+d/2}}{\Gamma(k+d/2)}\frac{(-1)^{k+1}}{k!} ( \pi \bm y^2 )^{k} \log (a^2)$

\EndIf

\algskip

\State \Return \textnormal{result}

\end{algorithmic}
\end{algorithm}

\subsection{Preprocessing}
\label{sec:preproc}
Whenever possible, both sums in Crandall's representation should decay similarly fast, to ensure a low number of total summands to evaluate. 
To that end, we rescale the lattice matrix as in Lemma \ref{lem:syms} so that its determinant equals one.
By translational symmetry, we restrict the evaluation
to 
vectors \(\bm x\), \(\bm y\) in their respective elementary lattice cells.
The following statement allows for the projection of \(\bm x\) to its elementary lattice cell as in the second step of our algorithm.

\begin{lemma}
Let $\Lambda=A\mathds Z^d$ for $A\in\mathds R^{d\times d}$ regular, let $\bm x \in \mathds{R}^d$
and 
\[
\bm v = A\lfloor A^{-1} \bm x + \bm 1/2 \rfloor,
\]
where \(\bm 1=(1,\ldots,1)^T\in\mathds R^d\) is the one-vector and
$\lfloor \cdot\rfloor$ is the element-wise applied floor function.
Then \(\bm v\in\Lambda\) and \(\bm x-\bm v\in E_{\Lambda}\).
\end{lemma}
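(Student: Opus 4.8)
The statement is essentially the assertion that $\lfloor A^{-1}\bm x + \bm 1/2\rfloor$ is the integer vector that identifies which translate of the fundamental cell $E_\Lambda = A[-1/2,1/2)^d$ contains $\bm x$. The plan is to work in the coordinates $\bm t = A^{-1}\bm x \in \mathds R^d$, reducing everything to the standard lattice $\mathds Z^d$ with fundamental cell $[-1/2,1/2)^d$. Write $\bm n = \lfloor \bm t + \bm 1/2\rfloor \in \mathds Z^d$ so that $\bm v = A\bm n$; since $\bm n \in \mathds Z^d$ we immediately get $\bm v \in A\mathds Z^d = \Lambda$, which is the first claim.

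For the second claim, $\bm x - \bm v = A(\bm t - \bm n)$, so by the definition $E_\Lambda = A[-1/2,1/2)^d$ it suffices to show $\bm t - \bm n \in [-1/2,1/2)^d$, i.e. componentwise $-1/2 \le t_i - n_i < 1/2$. This is a purely one-dimensional fact: for any real number $s$, the integer $\lfloor s + 1/2\rfloor$ satisfies $\lfloor s+1/2\rfloor \le s + 1/2 < \lfloor s+1/2\rfloor + 1$, hence $-1/2 \le s - \lfloor s+1/2\rfloor < 1/2$. Applying this with $s = t_i$ for each $i = 1,\dots,d$ gives $t_i - n_i \in [-1/2,1/2)$, and therefore $\bm t - \bm n \in [-1/2,1/2)^d$ and $\bm x - \bm v \in E_\Lambda$.

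There is no real obstacle here; the only thing to be careful about is the half-open convention (the interval is $[-1/2,1/2)$, not $(-1/2,1/2]$), which is exactly matched by using $\lfloor \cdot \rfloor$ rather than rounding-to-nearest with the opposite tie-breaking rule, and by the strict inequality $s + 1/2 < \lfloor s + 1/2\rfloor + 1$ coming from the defining property of the floor function. One should also note that $A$ regular guarantees $A^{-1}$ exists so that $\bm t$ is well-defined, and that the map $\bm w \mapsto A\bm w$ carries $[-1/2,1/2)^d$ bijectively onto $E_\Lambda$ and $\mathds Z^d$ onto $\Lambda$, so the reduction to the standard case is lossless.
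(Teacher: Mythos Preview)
Your proof is correct and follows essentially the same approach as the paper: reduce to lattice coordinates $\bm t = A^{-1}\bm x$, observe $\bm n=\lfloor\bm t+\bm 1/2\rfloor\in\mathds Z^d$ so $\bm v=A\bm n\in\Lambda$, and apply the one-dimensional inequality $-1/2\le s-\lfloor s+1/2\rfloor<1/2$ componentwise to conclude $\bm x-\bm v=A(\bm t-\bm n)\in E_\Lambda$. The paper's argument is the same, just with slightly terser notation.
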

\begin{proof}
Since the range of $\lfloor\cdot\rfloor$ is $\mathds{Z}^d$, we find that $\bm v_x \in \Lambda$. 
The statement follows by applying
\[ -\frac 12\le w-\lfloor w+1/2\rfloor <\frac 12,\qquad w\in\mathds R,\]
in every component of
\[A^{-1}(\bm x-\bm v)=A^{-1}\bm x-\lfloor A^{-1}\bm x+\bm 1/2\rfloor.
\qedhere
\]
\end{proof}

\subsection{Special cases}
\label{sec:special}
For specific values of $\nu$, poles occur in the gamma function or in the Crandall functions. These special cases need to be treated separately in order to guarantee a stable evaluation.

In \hyperref[alg]{Algorithm 1}, when \(\bm y\in\Lambda^*\), the pole of the Epstein zeta function at \(\nu=d\) is indicated directly without evaluating the Crandall representation.
At the removable singularities \(\nu\in -2 \mathds N_0\) of the gamma function, we return the appropriate limiting value, as shown in the following lemma.

\begin{lemma}
Let $\Lambda$ be a d-dimensional lattice, $\bm x, \bm y \in \mathds{R}^d$ and \(k\in\mathds N_0\). Then
\[
\zeps{\Lambda,-2k}{\bm x}{\bm y}
=
\begin{cases}
-e^{-2\pi i\bm x\cdot \bm y} & k=0\text{ and }\bm x\in\Lambda \\
0 & \text{otherwise}
\end{cases}.
\]
\end{lemma}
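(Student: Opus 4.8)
The plan is to read off the value from Crandall's representation (\Cref{crandall}) in the limit $\nu\to -2k$. Taking $\lambda=1$ for concreteness,
\[
\zeps{\Lambda,\nu}{\bm x}{\bm y}
=\frac{\pi^{\nu/2}}{\Gamma(\nu/2)}\Bigg[
\sum_{\bm z\in\Lambda}G_{\nu}(\bm z-\bm x)\,e^{-2\pi i\bm y\cdot\bm z}
+\frac{1}{V_{\Lambda}}\sum_{\bm k\in\Lambda^{\ast}}G_{d-\nu}(\bm k+\bm y)\,e^{-2\pi i\bm x\cdot(\bm k+\bm y)}\Bigg].
\]
Since $\nu=-2k\le 0<d$, \Cref{hol} guarantees that the left-hand side is holomorphic at $\nu=-2k$, so it suffices to compute this limit. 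The prefactor $1/\Gamma(\nu/2)$ has a simple zero at $\nu=-2k$ for every $k\in\mathds N_0$, so the value is $0$ unless the bracket develops a compensating simple pole.

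Next I would check that the bracket is holomorphic in $\nu$ near $-2k$, apart from the single summand $\bm z=\bm x$ in the first sum, which is present only when $\bm x\in\Lambda$. By \Cref{propCrandall}\,(1), each term $G_{\nu}(\bm z-\bm x)$ with $\bm z\neq\bm x$ and each $G_{d-\nu}(\bm k+\bm y)$ with $\bm k\neq-\bm y$ is holomorphic in $\nu$ on all of $\mathds C$ (the arguments being real nonzero vectors, hence in $D$), while the potentially singular terms $G_{d-\nu}(\bm 0)=-2/(d-\nu)$ (present if $\bm y\in\Lambda^{\ast}$) and $G_{\nu}(\bm 0)=-2/\nu$ (present if $\bm x\in\Lambda$) are holomorphic at $\nu=-2k$ provided $d-\nu\neq 0$ and $\nu\neq 0$; the first condition always holds because $d\ge 1$, and the second fails only in the case $k=0$, $\bm x\in\Lambda$. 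The compact uniform convergence of both lattice sums in a neighborhood of $\nu=-2k$ is exactly what was established in the proof of \Cref{hol}, so the bracket with the $\bm z=\bm x$ term removed is holomorphic, hence locally bounded, near $\nu=-2k$.

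If $k\ge 1$, or if $k=0$ and $\bm x\notin\Lambda$, the bracket is then finite at $\nu=-2k$ (note $G_{-2k}(\bm 0)=1/k$ when $k\ge 1$), and the simple zero of $1/\Gamma(\nu/2)$ forces $\zeps{\Lambda,-2k}{\bm x}{\bm y}=0$. In the remaining case $k=0$, $\bm x\in\Lambda$, the bracket equals $-\tfrac{2}{\nu}e^{-2\pi i\bm y\cdot\bm x}$ plus a term bounded as $\nu\to 0$; the bounded part contributes nothing after multiplication by $1/\Gamma(\nu/2)\to 0$, while for the singular part I would use $1/\Gamma(\nu/2)=(\nu/2)/\Gamma(\nu/2+1)$ to obtain
\[
\lim_{\nu\to 0}\frac{\pi^{\nu/2}}{\Gamma(\nu/2)}\Big(-\frac{2}{\nu}\Big)e^{-2\pi i\bm y\cdot\bm x}
=\lim_{\nu\to 0}\Big(-\frac{\pi^{\nu/2}}{\Gamma(\nu/2+1)}\Big)e^{-2\pi i\bm y\cdot\bm x}
=-e^{-2\pi i\bm x\cdot\bm y},
\]
which is the asserted value. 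The proof requires no deep step; the only point demanding care is the bookkeeping of which summands of the bracket are singular at $\nu=-2k$ and matching their poles against the zero of $1/\Gamma(\nu/2)$, together with invoking the uniform convergence from \Cref{hol} to legitimize passing to the limit inside the infinite sums. (Alternatively, one could reduce the case $\bm x\in\Lambda$ to $\bm x=\bm 0$ via the translation symmetry of \Cref{syms}, but this does not shorten the argument for $\bm x\notin\Lambda$.)
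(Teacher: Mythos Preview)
Your proof is correct and follows essentially the same approach as the paper: both use Crandall's representation, observe that $1/\Gamma(\nu/2)$ has a simple zero at $\nu=-2k$, verify boundedness of every summand except $G_{\nu}(\bm 0)=-2/\nu$ when $\bm x\in\Lambda$, and handle that term via $1/\Gamma(\nu/2)=(\nu/2)/\Gamma(\nu/2+1)$. If anything, your version is slightly more careful in explicitly invoking the compact uniform convergence from \Cref{hol} to justify passing to the limit inside the infinite sums, which the paper leaves implicit.
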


\begin{proof}
Let \(\nu\in\mathds C\setminus(-2\mathds N_0)\) and \(\bm x\notin\Lambda\), then
\[G_\nu(\bm z - \bm x) e^{-2 \pi  i  \bm y \cdot \bm z}, \qquad \bm z \in \Lambda\]
and
\[G_{d-\nu}(\bm k + \bm y) e^{-2 \pi  i  \bm x \cdot ( \bm k + \bm y)}, \qquad \bm k \in \Lambda^*\]
are bounded for \(\nu\to -2k\) due to 
Lemma \ref{lem:propCrandall}, and 
\[G_{d-\nu}(\bm 0) e^{-2 \pi  i  \bm x \cdot \bm 0} = \frac{1}{d-\nu}.\]
Since \(1/\Gamma(\nu/2)\to 0\) for \(\nu\to-2k\), 
Crandall's representation implies that the Epstein zeta function 
evaluates to zero for \(\nu\in -2 \mathds N_0\).
Now let \(\bm x\in\Lambda\), so there is an additional summand of the form
\[G_\nu(\bm 0) e^{-2 \pi  i  \bm y \cdot \bm x} = -\frac{2}{\nu} e^{-2 \pi  i  \bm y \cdot \bm x}\]
in Crandall's representation.
The Epstein zeta function still evaluates to zero for \(\nu\to -2\mathds N_+\)
and 
\[-\frac{\pi^{\nu/2}}{\Gamma(\nu/2)} \frac{2}{\nu} e^{-2 \pi  i  \bm y \cdot \bm x} = -\frac{\pi^{\nu/2}}{\Gamma(\nu/2 + 1)} e^{-2 \pi  i  \bm y \cdot \bm x} \rightarrow -e^{-2 \pi  i  \bm y \cdot \bm x}
,\qquad 
\nu \to 0.
\qedhere
\]
\end{proof}

\subsection{Symmetries}
\label{sec:syms}

Our algorithm effectively exploits the symmetries of the Epstein zeta function discussed in 
Lemma \ref{lem:syms}.
The scaling symmetry allows the reduction of general lattices to the case of unit elementary cell volume, $V_\Lambda = 1$.
Further, the translation symmetry allows us to restrict all investigations to open neighborhoods of
the elementary lattice cells  $\bm x \in E_\Lambda$ and $\bm y \in E_\Lambda^*$.

Although the regularised Epstein zeta function breaks the translational invariance and inversion symmetry of the Epstein zeta function, a modified form of the scaling symmetry can be recovered.

\begin{corollary}[Scaling symmetry of the regularised Epstein zeta function]
Let \(\Lambda\) be a \(d\)-dimensional lattice,
let \(\bm x\in\mathds{R}^d\),
let
$\bm y \in (\mathds R^d\setminus\Lambda^*)\cup\{\bm 0\}$ and $\nu\in\mathds C$.
Then 
\begin{enumerate}
    \item For $s\in\mathds R\setminus\{0\}$ and $\nu\notin (d+2\mathds N_0)$, we have
$$
\zepsr{\Lambda, \nu}{\bm x}{\bm y} = |s|^\nu \zepsr{s \Lambda, \nu}{s \bm x}{\bm y / s}.
$$
    \item For $s\in\mathds R\setminus\{0\}$ and $\nu= d+2k$ for $k\in\mathds N_0$, we have
$$
\zepsr{\Lambda, \nu}{\bm x}{\bm y} = |s|^\nu \zepsr{s \Lambda, \nu}{s \bm x}{\bm y / s}-
\frac1{V_{\Lambda}}\frac{\pi^{k+d/2}}{\Gamma(k+d/2)}\frac{(-1)^{k+1}}{k!} ( \pi \bm y^2 )^{k} \log (s^2)
.
$$
\end{enumerate}
\end{corollary}
\begin{proof}
We may restrict our discussion on $\mathrm{Re}(\nu)>d$ by the uniqueness of the analytic continuation. There, the Epstein zeta function is defined via an absolutely convergent lattice sum, see 
Lemma \ref{lattice-sum-converges}. 
For $\nu\notin (d+2\mathds N_0)$ and $\bm y\neq\bm 0$, the scaling symmetry of the regularised Epstein zeta function follows from the scaling symmetry of the Epstein zeta function, 
Definition \ref{def:epstienreg}, and 
$$
\frac1{V_{s\Lambda}}\hat{s}_{\nu}(\bm y/s)
=
\frac 1{|s|^dV_{\Lambda}}|s|^{d-\nu}\hat{s}_\nu(\bm y)
$$
where the formula continuously extends to $\bm y=\bm 0$.
For $\nu=d+2k$ we obtain the correction term from
\[
\frac1{V_{s\Lambda}}\hat{s}_{d+2k}(\bm y/s)
=
\frac1{|s|^{d}V_{\Lambda}}|s|^{-2k}\Big(\hat{s}_{\nu}(\bm y)
-
 \frac{\pi^{k+d/2}}{\Gamma(k+d/2)}\frac{(-1)^{k+1}}{k!} ( \pi \bm y^2 )^{k} \log (s^2)
 \Big).
\qedhere
\]
\end{proof}

\subsection{Truncation and compensated summation}
\label{sec:abwert}

In this subsection, we present a rigorous error bound when truncating the sums in Crandall's representation. The proof is provided in Appendix \ref{sec:appendix-trunc}.

\begin{theorem}[Remainder estimate]
  \label{theorem:trunc}
Let $\Lambda=A\mathds Z^{d\times d}$, for $A\in\mathds R^{d\times d}$ regular and $\det(A)=1$. Let $\bm x, \bm y \in \mathds{R}^d$ and let
$\nu\in I$ with $I\subset\mathds R$ compact so that $\nu\neq d$ if $\bm y\in\Lambda^*$.
Let $\mathcal{R}_{\Lambda}(r)$ denote the remainder obtained by truncation at $r>0$ in Crandall's representation
\begin{align*}
\zeps{\Lambda,\nu}{\bm{x}}{\bm{y}}
=&\frac{\pi ^{\nu/2}}{\Gamma(\nu/2)}\Bigg[\sum_{\substack{\bm z\in\Lambda\\ |\bm z - \bm x|\le r}}
G_{\nu}(\bm{z} -\bm x)e^{-2\pi i\bm{y}\cdot\bm{z}}
\\
&
\quad
+\frac{1}{V_{\Lambda}}
\sum_{\substack{\bm k\in\Lambda^{\ast}\\ |\bm k + \bm y|\le r}}G_{d-\nu}
(\bm{k} + \bm y) e^{2\pi i\bm{x}\cdot (\bm{k} + \bm y)}
\Bigg]+\mathcal R_\Lambda(r).
\end{align*}
Then, for any $\varepsilon>0$ and $r>(1+2\varepsilon)\sqrt{d}\,\kappa(A)$ with the condition number $\kappa(A)=\|A\|\|A^{-1}\|$, where $\|\cdot\|$ denotes the spectral norm,
it holds that
$$
|\mathcal R_\Lambda(r)|<\kappa(A)^{d+1} \sup_{\nu \in I}
c_\nu
\Big(
R_{\nu}\big(r/\kappa(A)\big)
+
R_{d-\nu}\big(r/\kappa(A)\big)
\bigg)
$$
with the prefactor \[c_\nu = \frac{(3/2)^d\pi^{(\nu+d)/2}}{\Gamma(d/2+1)|\Gamma(\nu/2)|},\]
and with the function
$$
R_{\nu}(r)
=\frac {r^{d+1}}\varepsilon
\Big(
\frac{G_{d+1}(r-\varepsilon) - 
G_{\nu}(r-\varepsilon)}{d+1-\nu}\Big),
$$
where the limit $\nu\to d+1$ is well-defined.
In particular, 
\[
\mathcal R_\Lambda(\kappa(A)r)
\le \kappa(A)^{d+1}
\mathcal R_{\mathds Z^d}(r).\]
\end{theorem}

For practical purposes, a sufficient upper bound to the error in 
Theorem \ref{theorem:trunc} is obtained by choosing
$\varepsilon=1/20$.
We present sufficient truncation values $r=r_0$ for achieving machine precision for the square lattice $\Lambda=\mathds Z^d$ as a function of dimension $d$ in 
Table \ref{tab:square-lattice-trunc-values}.
The corresponding value for general lattices $\Lambda=A\mathds Z^d$ 
where $\det(A)=1$ 
is then obtained as
\[
r= \kappa(A) r_0,
\]
where the error lies below machine precision
if $\kappa(A)^{d+1}\le 10^2$
for the values $r_0$ in  
Table \ref{tab:square-lattice-trunc-values}.

In infinite precision arithmetic, a truncation value based on 
Theorem \ref{theorem:trunc} guarantees that the desired precision is reached when computing the Epstein zeta function.  However, in floating point arithmetic, roundoff errors accumulate in the evaluation of the sums for large dimensions $d$, which can exceed the rigorous truncation error bound by several orders of magnitude. A practical solution that significantly reduces the roundoff error is provided by the compensated summation algorithm developed by Kahan \cite{kahan}. Here, a separate compensator variable is introduced, which stores an estimation of the accumulated error. Using compensated summation, we show in 
Section \ref{sec:experiments} that our resulting algorithm reaches machine precision across all test cases, even for large dimensions $d$. \new{An alternative approach involves dividing the summation ball into shells and performing the summation in reverse order, starting from the outermost shell with the smallest absolute values and progressing toward the inner shells. In our implementation, however, we prefer Kahan compensated summation, since it is straightforward to apply in the natural traversal order of the lattice points and its additional arithmetic cost is negligible compared with the evaluation of the incomplete gamma functions.}

\begin{table}
\centering
\begin{tabular}{c | *{16}{c}}
\toprule
$d$ & 1 & 2 & 3 & 4 & 5 & 6 & 7 & 8 & 9 & 10  \\
\midrule
$r_0$  &3.8 &3.9 &4. &4.1 &4.2 &4.2 &4.3 &4.4 &4.4 &4.5 \\
\bottomrule
\end{tabular}
\caption{Truncation values \(r_0\) to achieve a truncation error $\mathcal R_{\mathds Z^{d}}<10^{-18}$ in Crandall's representation for and $-10\le \nu\le 10$ 
obtained by 
Theorem \ref{theorem:trunc} for the choice of $\varepsilon = 1/20$.}
\label{tab:square-lattice-trunc-values}
\end{table}

\subsection{Evaluation of the incomplete gamma functions}
\label{sec:ogammaalg}

A stable, fast, and accurate evaluation of the incomplete gamma function is the foundation of any algorithm relying on Crandall's representation of the Epstein zeta function.
A wide range of algorithms \new{focus} on evaluating $\Gamma(\nu, x)$ at the restricted parameter range $\nu > 0$, see for example \cite{abergel2020algorithm,gil}.
Implementations in arbitrary precision libraries such as Arb and FLINT \cite{arb} guarantee correct results but are not usable in high-performance applications due to long evaluation times.
Fast and stable implementations based on the work of \cite{gautschi} only offer single precision.
Therefore, no existing implementation is sufficient for a reliable, high-performance implementation of the Epstein zeta function.
We address this problem and provide a fast and precise implementation based on the work of \cite{gautschi,gil} that evaluates the incomplete Gamma function to full precision over the whole parameter range. In the following, we discuss the underlying methods. \new{We note that a robust implementation of the incomplete gamma functions is available in Julia through \cite{SpecialFunctions_jl}, following \cite[§8]{NIST:DLMF}. However, this library is not suitable in our setting, as embedding a scripting-language runtime into a high-performance C library would lead to significant overhead and compromise performance.}

\begin{figure}
    \centering        
    \includegraphics[width=1\linewidth]{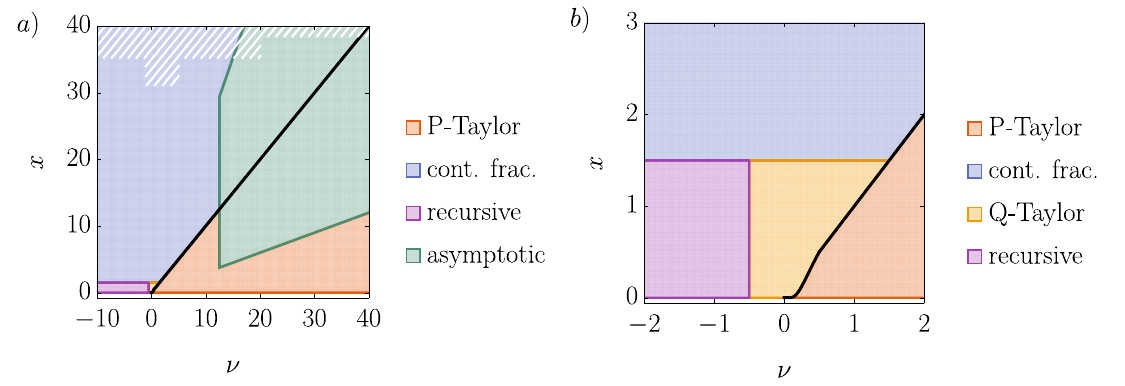}
    \caption{Regions of different evaluation methods of the incomplete Gamma functions (a).
    Panel (b) offers a magnified view of the region close to the origin.
\new{The dashed white region in (a) indicates where we use an asymptotic expansion to evaluate the Crandall function, which is otherwise computed via the incomplete gamma function.}    }
\label{fig:gammareg}
\end{figure}

The algorithm is based on evaluating the regularisation of the incomplete Gamma functions in the first parameter.

\begin{definition}
Let $x, \nu > 0$. Then, we define the incomplete gamma function ratios
\[P(\nu, x) = \frac{\gamma(\nu, x)}{\Gamma(\nu)}, \qquad Q(\nu, x) =  \frac{\Gamma(\nu, x)}{\Gamma(\nu)}.\]
\end{definition}

Depending on the values of $x$ and $\nu$, different methods for computing these functions need to be applied. The regions corresponding to the different evaluation methods as functions of 
$x$ and $\nu$ are illustrated in 
Figure \ref{fig:gammareg}.

By construction,
the range of both $P$ and $Q$ equals $[0, 1]$ and we have $P + Q = 1$.
We aim to compute $P$ whenever $Q>P$, and $Q$ if $Q<P$. 
The black line in 
Figure \ref{fig:gammareg} indicates an approximation of this separation, where we
compute $P$ whenever
\[
x <
\begin{cases}
2^{1-1/\nu}&0 < \nu < \frac{1}{2}\\
\nu &\nu \geq \frac{1}{2}
\end{cases}
\]
and compute $Q$ otherwise.

The computation of $P$ (blue region) is performed via the power series
\[P(\nu, x) = \frac{x^\nu e^{-x}}{\Gamma(\nu + 1)} \sum_{n = 0}^\infty \frac{x^n}{(\nu + 1)_n},\]
which can be derived through integration by parts. 
Here, 
$$(\nu + 1)_n = \Gamma(\nu + n + 1) / \Gamma(\nu + 1)$$ 
denotes the rising Pochhammer symbol.
The above Taylor series converges for $\nu > -1$ and arbitrary $x$. 

From this power series, we can derive a power series for $Q$ (green region). 
We write $Q = 1-P = u+v$, where
\[u = 1 - \frac{1}{\Gamma(\nu+1)} + \frac{1-x^\nu}{\Gamma(\nu+1)}\]
and
\[v = \frac{x^\nu}{\Gamma(\nu+1)} (1 - \Gamma(\nu+1) x^{-\nu} P(\nu, x)).\]
The term $v$ can be evaluated using the power series above. 
For the term $u$, we evaluate the parts
\[1 - \frac{1}{\Gamma(\nu+1)}\]
and
\[\frac{1-x^\nu}{\Gamma(\nu+1)}\]
separately using their Taylor series expansions.
Details can be found in \cite{gautschi}.
This method works for arbitrary $x$ and $\nu > -1$, and the relative error grows with $x$.
We apply this method of computing $Q$ for $\nu > -1/2$ and $x < x_0 = 3/2$.

Another method to compute $Q$ is given by the continued fraction expansion (orange region),
\[x^{-\nu} e^x \Gamma(\nu, x) = \frac{1}{x+} \frac{1-\nu}{1+} \frac{1}{x+} \frac{2-\nu}{1+} \frac{2}{x+} \frac{3-\nu}{1+} \frac{3}{x+} \dots\]
attributed to Legendre. A proof can be found in \cite{wall}. Details for evaluating the continued fraction can be found in \cite{gautschi}. This continued fraction corresponds to an asymptotic expansion at 
$x \to \infty$ and thus, it converges fast for large $x$. However, the convergence deteriorates for $x \approx \nu$.

For large $x$ and large $\nu$, we use the uniform asymptotic expansion (grey region) derived in \cite{temme1}. Details on the computation can be found in \cite{gil} and \cite{temme3}.

For negative $\nu$ and $x < x_0$, we use a recurrence relation (red region). Here, we make use of the regularisation
\[G(\nu, x) = e^x x^{-\nu} \Gamma(\nu, x).\]
Integration by parts gives the recurrence relation
\[G(-n+\varepsilon, x) = \frac{1}{n-\varepsilon} (1 - x G(-n+1+\varepsilon, x)),\]
where we assume $-1/2 \leq \varepsilon \leq 1/2$ and $n \in \mathds{N}_+$. $G(\varepsilon, x)$ can be evaluated using the methods described above. A stability discussion of this recursion pattern can be found in \cite{gautschi}; for $x_0=3/2$, the relative error of $G(\varepsilon, x)$ is amplified by at most $5.7$.

\new{
The error of our algorithm in comparison with high-precision reference values is shown in Figure~\ref{fig:gammatm}.
There, we compute the incomplete Gamma function for
$-12.5\le\nu\le12.5$ and $0<x\le 20$ in increments of $\Delta\nu=\Delta x=2^{-4}$ (a), and for $0<x<2$ in increments of $\Delta x=2^{-7}$ (b).
The error is flat and bounded everywhere by $3\cdot 10^{-15}$ with a median error of approximately $2.8\cdot 10^{-17}$.
The evaluation time
on an Intel Core i7-1260P processor for all of these values is smaller than $8\cdot 10^{-7}$ seconds, while the median evaluation time is approximately $1.4\cdot 10^{-7}$ seconds.
In comparison, the median evaluation time of the incomplete gamma function to double precision in the arbitrary-precision C library Arb/Flint \cite{arb} over the same values on the same machine is $9\cdot 10^{-6}$ seconds, with a maximum evaluation time of $2\cdot 10^{-4}$ seconds. Using arbitrary-precision libraries would therefore significantly increase the runtime of our algorithm.
The typical evaluation time of the incomplete gamma function to double precision in Mathematica and in the \texttt{mpmath} Python package is around $10^{-5}$ seconds.
}

To compute the lower incomplete gamma function, we can use the previous considerations to get a modified algorithm. Instead of using the series expansion for the functions $P$ and $Q$, we directly evaluate $\gamma^*$ with a similar series. For negative $\nu$, specialised evaluation is required at $\nu \in -\mathds{N}_+$ and at $x=0$. It turns out to be advantageous to avoid the recurrence relation around $\nu = -1/2$ for small $x$. Here, we also use the series expansion for $\gamma^*$.

\new{We note recent developments on $\mathcal O(1)$ algorithms for the evaluation of the upper incomplete gamma function in \cite{greengard2019algorithm}. However, these methods are not yet applicable in our setting, as they are currently restricted to positive arguments.}

\new{
The above considerations are used to evaluate the Crandall functions. For vector arguments  with sufficiently large norm, we employ the asymptotic expansion of the upper Crandall function
$$
G_{\nu}(\bm z)\approx
\frac{e^{-\pi \bm z^2}}{\pi \bm z^2}
\Big(
1+\frac{\nu-2}{2\pi\bm z^2}
\Big)
,\qquad |\bm z|\gg 0,
$$
which follows directly from \cite[6.5.32]{abramowitz}. Specifically, we use this expansion in the regions $\nu\in (-1,5)$ and $|\bm z|\ge 3.15$, or $\nu\in (-20,20)$ and $|\bm z|>3.35$, or $\nu\in(-150,60)$ and $|\bm z|>3.5$. This region was determined numerically so that both the absolute and relative errors between the expansion and the exact value remain below $10^{-18}$ and is displayed by the dashed white lines in Figure~\ref{fig:gammareg} (a).
}

\begin{figure}
    \centering        
    \includegraphics[width=1\linewidth]{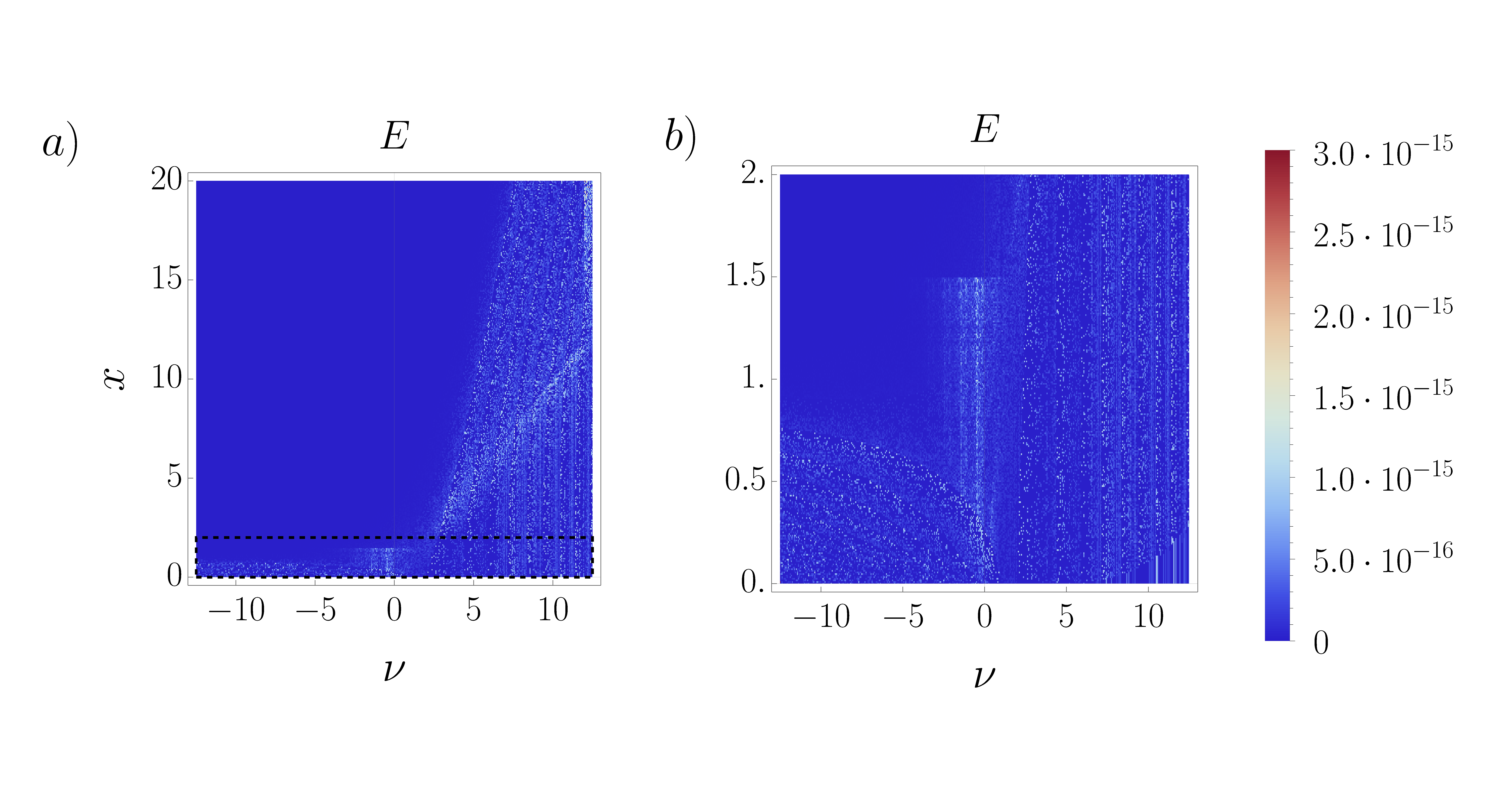}
    \caption{Minimum of absolute and relative error of our implementation of the upper incomplete gamma function compared to the arbitrary precision result from \cite{Johansson2017arb}. The dashed black box in (a) outlines the magnified region shown in (b).}
\label{fig:gammatm}
\end{figure}

\section{Numerical experiments}
\label{sec:experiments}

\begin{table}
\renewcommand{\arraystretch}{1.3}
\centering
\resizebox{\textwidth}{!}{%
\begin{tabular}{c|c|c|c|c|l}
    \toprule
    Sum & \(d\) & \(A\) &\(\bm x\) & \(\bm y\)  & value \\
    \midrule
    \(S_1\) & \(1\) & \(\mathds 1\) & \(-\bm e^{(1)}/2\) & \(\bm 0\)  
                & \(2 \zeta(\nu,1/2)\)\\
    \(S_2^{(1)}\) & \(2\) & \(\operatorname{diag}(1,2)\) & \(-\bm e^{(1)}-2\bm e^{(2)}\) & \(\bm 0\)  
                & \(2 (1 - 2^{-\nu/2} + 2^{1 - \nu}) \)\\[-0.5ex]
                & & & & & \(~\times \zeta(\nu/2)\beta(\nu/2) \) \\[-1ex]
    \(S_2^{(2)}\) & \(2\) & \(\left(\begin{matrix}1 & 1/2 \\ 0 & \sqrt{3}/2\end{matrix}\right)\) & \(\bm 0\) & \(\bm 0\)  
                & \(3^{1 - \nu/2}2 \zeta(\nu/2)\) \\[-2.5ex]
                & & & & & \(~\times(\zeta(\nu/2, 1/3) - \zeta(\nu/2, 2/3))\)\\
    \(S_3^{(1)}\) & \(3\) & \(\operatorname{diag}(1,1,2)\) & \(-\bm e^{(3)}/2\) & \(\bm e^{(1)}/2\)  
                & \(4^{\nu/2}\beta(\nu-1)\)\\
    \(S_3^{(2)}\) & \(3\) & \(\operatorname{diag}(6,6,6)\) & \(-\bm 1\) & \(\bm 1/12\)  
                & \(3^{-\nu/2} \beta(\nu - 1)\)\\
    \(S_3^{(3)}\) & \(3\) & \(\operatorname{diag}(2\sqrt{2},4,2)\) & \(-\bm e^{(2)}-\bm e^{(3)}\) & \(\bm e^{(1)}/(4\sqrt{2})\)  
                & \(2^{1 - \nu/2} \beta(\nu - 1)\)\\
    \(S_4\) & \(4\) & \(\mathds 1\) &\(\bm e^{(1)}/2\) & \(\bm 0\) 
                & \(2^\nu(\beta(\nu/2)\beta(\nu/2-1)\)\\
    & & & & & \(~+\lambda(\nu/2)\lambda(\nu/2-1))\)\\
    \(S_6\) & \(6\) & \(\mathds 1\) &\(\bm 0\) & \((\bm e^{(1)}+\bm e^{(2)})/2\) 
                & \(4 \beta(\nu/2 - 2)\eta(\nu/2)\)\\
    \(S_8\) & \(8\) & \(\mathds 1\) &\(\bm 0\) & \(\bm 1/2\)
                & \(-16 \eta(\nu/2 - 3)\zeta(\nu/2)\)\\
     \bottomrule
\end{tabular}
}
\caption{
Analytic formulas for special cases of the Epstein zeta function for \(\Lambda =A\mathds{Z}^{d\times d}\) in \(d=1,2,4,6,8\) dimensions.
Here, \(\mathds 1\in\mathds{R}^{d\times d}\) is the unit matrix, \(\operatorname{diag}(a_1,\ldots,a_d)\in\mathds{R}^{d\times d}\) is the diagonal matrix with diagonal entries \(a_1,\ldots,a_d\), the unit vector \(\bm e^{(i)}\in\mathds{R}^d\) has the components \(e^{(i)}_j=\delta_{ij}\) for \(1\le i,j\le d\) and the one-vector is defined as \(\bm 1 = (1,\ldots,1)^T\in\mathds{R}^{d}\).}
    \label{tab:defanalyticsums}
\end{table}

\begin{figure} 
    \centering
\includegraphics[width=1\linewidth]{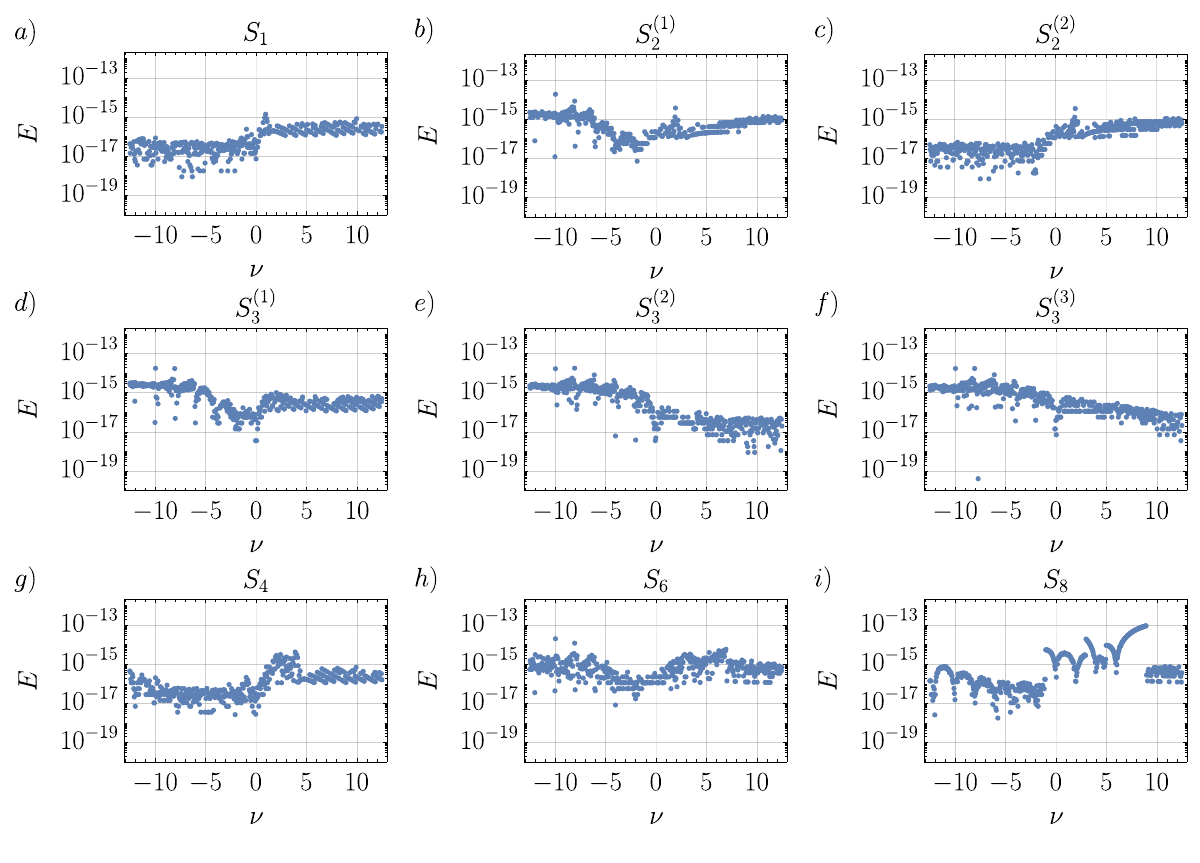}
    \caption{Minimum of absolute and relative error of the Epstein zeta function for the values as in 
    Table \ref{tab:defanalyticsums}.
        The evaluation times of the regularised function show similar behavior.
    }
\label{fig:epsteinerror}
\end{figure}

\begin{table}
\begin{tabular}{cc|cc|cc}
\toprule
 \multicolumn{2}{c|}{$d=1$} & \multicolumn{2}{c|}{$d=2$} & \multicolumn{2}{c}{$d=3$} \\
\cline{1-6}
$E_{\rm med}$ & $E_{\rm max}$ & $E_{\rm med}$ & $E_{\rm max}$ & $E_{\rm med}$ & $E_{\rm max}$\\
\toprule

$1.11\cdot 10^{-16}$ & $1.25\cdot 10^{-15}$ & $1.11\cdot 10^{-16}$ & $1.67\cdot 10^{-15}$ & $8.78\cdot 10^{-17}$ & $2.65\cdot 10^{-15}$ \\

\bottomrule
\end{tabular}
 \caption{
 \new{
 Minimum of absolute and relative error $E$ of
 $Z_{\mathds Z^d,\nu}(\bm x, \bm y)$ in comparison with an arbitrary precision implementation of our method, where for each fixed $d\in\{1,2,3\}$,
the median and maximum of the errors over a grid of values $-1/2\le \nu\le 3/2$ and
 $\bm y^T=(y,\ldots,y)$ and $\bm x^T=(x,\ldots,x)$ for $0\le x,y\le 1/2$ in increments of $\Delta\nu=1/4$ and $\Delta y=\Delta x=1/10$ is shown, while omitting the singularity at $(\nu,\bm y)=(d,\bm 0)$.
 }
    }
\label{tab:stability}
\end{table}

\begin{figure} 
    \centering
        \includegraphics[width=1\linewidth]{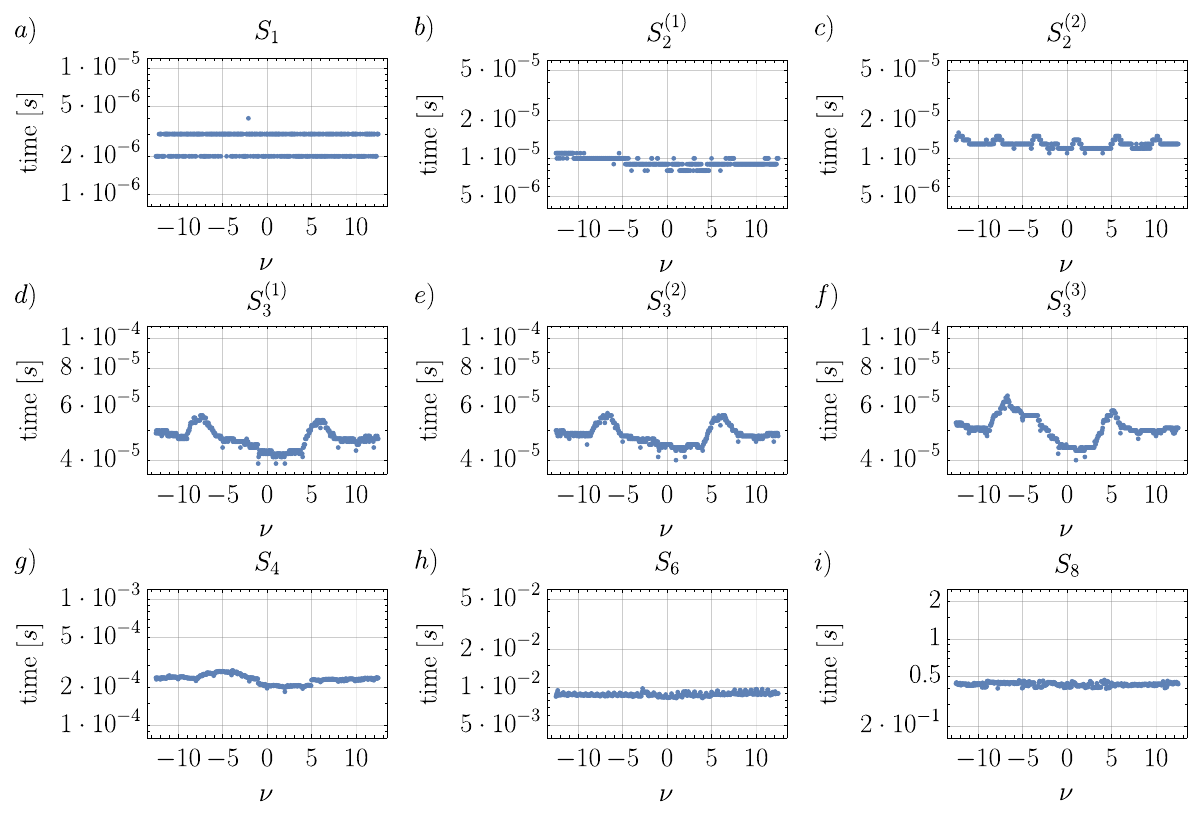}
    \caption{Evaluation times of the Epstein zeta function 
    of the Epstein
zeta function for the values as in 
Table \ref{tab:defanalyticsums}.
    The evaluation times of the regularised function are approximately equal, see 
    Table \ref{fig:perfacc}.}
    \label{fig:zetatiming}
\end{figure}

\begin{table}
\resizebox{\linewidth}{!}{%
\renewcommand{\arraystretch}{1.3}
\centering
\begin{tabular}{l|c|c|c|c|c|c|c|c|c}
    \toprule
& $S_1$ & $S_2^{(1)}$ & $S_2^{(2)}$ & $S_3^{(1)}$ & $S_3^{(2)}$ & $S_3^{(3)}$ & $S_4$ & $S_6$ & $S_8$ \\
    \midrule

\shortstack[l]{Summand \\ functions}
 & \shortstack[c]{$14$ \\ $90.5\%$} 
 & \shortstack[c]{$110$ \\ $93.3\%$} 
 & \shortstack[c]{$242$ \\ $92.4\%$} 
 & \shortstack[c]{$680$ \\ $91.7\%$} 
 & \shortstack[c]{$686$ \\ $91.8\%$} 
 & \shortstack[c]{$770$ \\ $91.2\%$} 
 & \shortstack[c]{$4.80 \cdot 10^{3}$ \\ $88.8\%$} 
 & \shortstack[c]{$2.35 \cdot 10^{5}$ \\ $84.8\%$} 
 & \shortstack[c]{$1.15 \cdot 10^{7}$ \\ $87.8\%$} 
\\

\midrule

\shortstack[l]{Incomplete \\ gamma}
 & \shortstack[c]{$12$ \\ $78.3\%$} 
 & \shortstack[c]{$68$ \\ $74.9\%$} 
 & \shortstack[c]{$69$ \\ $55.9\%$} 
 & \shortstack[c]{$294$ \\ $62.5\%$} 
 & \shortstack[c]{$302$ \\ $62.3\%$} 
 & \shortstack[c]{$297$ \\ $61.6\%$} 
 & \shortstack[c]{$1.13 \cdot 10^{3}$ \\ $45.9\%$} 
 & \shortstack[c]{$1.36 \cdot 10^{4}$ \\ $14.9\%$} 
 & \shortstack[c]{$1.12 \cdot 10^{5}$ \\ $2.7\%$} 
\\

\bottomrule
\end{tabular}
}
\caption{%
\new{
Number of calls and fraction of CPU time spent in the different functions of \hyperref[alg]{Algorithm 1} for the benchmark functions defined in Table~\ref{tab:defanalyticsums}.
Each entry gives the total number of calls and the percentage of the total runtime for the sums in Table~\ref{tab:defanalyticsums}, evaluated for a range of values $-12.5\le \nu\le 12.5$ similar to Figure~\ref{fig:epsteinerror} and for $d\in\{1,2,3,4,6,8\}$, rounded to three significant digits. For each input, the computation was repeated an appropriate number of times depending on the dimension.
The summand functions refer to the evaluation of the summands in real and reciprocal space on the right-hand side of $\texttt{sum\_rel}\pluseq$ and $\texttt{sum\_reciprocal}\pluseq$ in step 4 of \hyperref[alg]{Algorithm 1}, including the Crandall function, the exponential factor, and the linear algebra required to compute the lattice vectors $\bm z\in A\mathds Z^d$ and $\bm k\in A^{-T}\mathds Z^d$.
As the Crandall function is evaluated using an asymptotic formula for large second arguments, it is called more often than the incomplete gamma function.
}
}
    \label{tab:percentages}
\end{table}

This section provides a detailed analysis of the error and runtime of our algorithm for an extensive set of parameters.
\new{
To the best of our knowledge, no other implementation of the Epstein zeta function for non-trivial arguments in $d>1$ dimensions exists.}
\new{To reliably }benchmark the error of our algorithm, known formulas for special cases of the lattice sums
\[
S_d=\zeps{\Lambda,\nu}{\bm x}{\bm y}
\]
from \cite{crandall2012unified,zucker2017exact,burrowsLatticeSumHexagonal2023} 
in $d=1,2,3,4,6,8$ dimensions are collected. 
The particular choices for $\bm x$, $\bm y$, and $A$, as well as the resulting analytic formulas for elementary functions, are provided in 
Table \ref{tab:defanalyticsums}. 
Here $\eta$ denotes the Dirichlet eta function
\[\eta(\nu)=(1-2^{1-\nu})\zeta(\nu),\]
$\lambda$ the Dirichlet lambda function, 
\[\lambda(\nu)=(1-2^{-\nu})\zeta(\nu)\]
and $\beta$ the Dirichlet beta function,  
\[\beta(\nu)
=4^{-\nu}(\zeta(\nu,1/4)-\zeta(\nu,3/4)).\]
The function \(\zeta(\cdot)\) with one complex argument is the Riemann zeta function and
for any $x>0$, the function \(\zeta(\cdot,x)\) denotes the Hurwitz zeta function, given by the meromorphic continuation of
\[\zeta(\nu,x)=\sum_{n=0}^{\infty}\frac{1}{(n+x)^{\nu}}
,\qquad
\Re\nu>1\]
to \(\nu\in\mathds{C}\),
see \cite[Section §25.11(i)]{NIST:DLMF}.
Note that the regularised Epstein zeta function follows from Def.~\ref{def:epstienreg} and agrees with the Epstein zeta function in the case $\bm y = 0$ if $\nu\neq d$. 
We evaluate the \new{analytic formulas} for $\nu=-12.5+\varepsilon,\dots,12.5+\varepsilon$ for \(\varepsilon = 2^{-15}\) in increments of 0.05 in Mathematica with 200 digits and measure the minimum of relative and absolute error \[E=\mathrm{min}(E_{\mathrm{abs}}, E_\mathrm{rel})\] of our implementation both for the normal and the regularised Epstein zeta function.
The offset \(\varepsilon\) is chosen to avoid instant evaluation in the special cases of our algorithm, which would distort the timing benchmarks.
The runtime and precision of our algorithm for both the standard and regularised Epstein zeta function are summarised in 
Figure \ref{fig:perfacc}.

We display the error as a function of \(\nu\) in Figure \ref{fig:epsteinerror}.
We obtain full precision over the complete $\nu$-range, with the maximum error rising mildly from \new{$1.6\times 10^{-15}$ for $d=1$ to $9.1\times 10^{-14}$} for large dimension $d=8$. The small increase in the error of the regularised Epstein zeta function around $\nu=d+2\mathds N_0$ is unavoidable and results from the singularities of the Fourier transform of $\vert \bm \cdot \vert^{-\nu}$.

\begin{table}
\centering
\begin{tabular}{cccccc}
    \toprule
    sum & type & \(E_{\mathrm{max}}\) & \(t_{\mathrm{min}}-t_{\mathrm{max}}\,[s]\) & \(\new{t_{\mathrm{med}}\,[s]}\)\\
\midrule
\midrule
\multirow{2}{*}{$S_1$} & non-reg & $1.4\cdot 10^{-15}$ & $2.0\cdot 10^{-6}$$-$$4.0\cdot 10^{-6}$ & $3.0\cdot 10^{-6}$ \\ 
			         & reg & $1.6\cdot 10^{-15}$ & $2.0\cdot 10^{-6}$$-$$4.0\cdot 10^{-6}$ & $3.0\cdot 10^{-6}$ \\

\midrule
\multirow{2}{*}{$S_2^{(1)}$} & non-reg & $1.8\cdot 10^{-14}$ & $8.0\cdot 10^{-6}$$-$$1.1\cdot 10^{-5}$ & $9.0\cdot 10^{-6}$ \\ 
			         & reg & $1.8\cdot 10^{-14}$ & $8.0\cdot 10^{-6}$$-$$1.2\cdot 10^{-5}$ & $1.0\cdot 10^{-5}$ \\

\midrule
\multirow{2}{*}{$S_2^{(2)}$} & non-reg & $3.4\cdot 10^{-15}$ & $1.1\cdot 10^{-5}$$-$$1.6\cdot 10^{-5}$ & $1.3\cdot 10^{-5}$ \\ 
			         & reg & $3.4\cdot 10^{-15}$ & $1.1\cdot 10^{-5}$$-$$1.6\cdot 10^{-5}$ & $1.3\cdot 10^{-5}$ \\

\midrule
\multirow{2}{*}{$S_3^{(1)}$} & non-reg & $1.7\cdot 10^{-14}$ & $3.9\cdot 10^{-5}$$-$$5.6\cdot 10^{-5}$ & $4.7\cdot 10^{-5}$ \\ 
			         & reg & $1.8\cdot 10^{-14}$ & $3.9\cdot 10^{-5}$$-$$5.6\cdot 10^{-5}$ & $4.7\cdot 10^{-5}$ \\

\midrule
\multirow{2}{*}{$S_3^{(2)}$} & non-reg & $1.8\cdot 10^{-14}$ & $4.0\cdot 10^{-5}$$-$$5.7\cdot 10^{-5}$ & $4.8\cdot 10^{-5}$ \\ 
			         & reg & $1.8\cdot 10^{-14}$ & $4.1\cdot 10^{-5}$$-$$5.7\cdot 10^{-5}$ & $4.8\cdot 10^{-5}$ \\

\midrule
\multirow{2}{*}{$S_3^{(3)}$} & non-reg & $1.7\cdot 10^{-14}$ & $4.0\cdot 10^{-5}$$-$$6.5\cdot 10^{-5}$ & $5.1\cdot 10^{-5}$ \\ 
			         & reg & $1.7\cdot 10^{-14}$ & $4.0\cdot 10^{-5}$$-$$6.4\cdot 10^{-5}$ & $5.1\cdot 10^{-5}$ \\

\midrule
\multirow{2}{*}{$S_4$} & non-reg & $4.1\cdot 10^{-15}$ & $1.9\cdot 10^{-4}$$-$$2.7\cdot 10^{-4}$ & $2.3\cdot 10^{-4}$ \\ 
			         & reg & $4.1\cdot 10^{-15}$ & $1.8\cdot 10^{-4}$$-$$2.7\cdot 10^{-4}$ & $2.4\cdot 10^{-4}$ \\

\midrule
\multirow{2}{*}{$S_6$} & non-reg & $2.0\cdot 10^{-14}$ & $8.2\cdot 10^{-3}$$-$$9.8\cdot 10^{-3}$ & $8.8\cdot 10^{-3}$ \\ 
			         & reg & $2.1\cdot 10^{-14}$ & $8.1\cdot 10^{-3}$$-$$1.1\cdot 10^{-2}$ & $8.8\cdot 10^{-3}$ \\

\midrule
\multirow{2}{*}{$S_8$} & non-reg & $9.1\cdot 10^{-14}$ & $4.0\cdot 10^{-1}$$-$$4.7\cdot 10^{-1}$ & $4.3\cdot 10^{-1}$ \\ 
			         & reg & $2.2\cdot 10^{-14}$ & $4.0\cdot 10^{-1}$$-$$4.7\cdot 10^{-1}$ & $4.3\cdot 10^{-1}$ \\
     \bottomrule
\end{tabular}
\caption{Performance and accuracy of the algorithm, compared to analytic representations as in 
Table \ref{tab:defanalyticsums}\new{, including the minimum, maximum and median evaluation times}.
The lower index $d$ of the sum $S_d^{(j)}$ indicates the lattice dimension.}
\label{fig:perfacc}
\end{table}

In  Figure \ref{fig:zetatiming}, we display the runtime of our algorithm for computing the sums $S_d$ as a function of $\nu$. The values were obtained on an Intel Core i7-1260P (12th Gen) 16-core processor with 32 GB of RAM. Here, the runtime only mildly depends on the choice of $\nu$, as well as of $\bm x$ and $\bm y$. Our implementation enables the computation of 2D sums in less than $20$ microseconds and 4D sums in less than \new{$0.3$} milliseconds. Even 8D sums become accessible with a runtime of less than \new{half a second} second on a standard laptop.

\new{We verify the numerical stability of our method with respect to $\bm x$ and $\bm y$ by comparison with an arbitrary-precision implementation of our method for $\Lambda=\mathds Z^d$ with $d=1,2,3$ and $\nu\in\{-1/2,1/2,3/2\}$. We consider $\bm x=(x,\dots,x)^T$ and $\bm y=(y,\dots,y)^T$ with $0\le x,y\le 1/2$, sampled in increments of $1/5$. The corresponding errors are reported in Table~\ref{tab:stability}. Owing to the full numerical stability of our incomplete gamma function implementation described in Sec.~\ref{sec:ogammaalg}, we retain full precision even in the vicinity of the singularities at $\bm x=0$ and $\bm y=0$.}

\new{Finally, Table~\ref{tab:percentages} summarizes both the number of function calls and the runtime percentages for the complete summand function and for the incomplete gamma function alone, for the benchmark functions defined in Table~\ref{tab:defanalyticsums}. For the summand function, we include all linear algebra operations as well as the evaluation of all special functions required to compute a single summand in the Crandall representation. Since an asymptotic expansion of the Crandall function is employed in the outer summation shells, the incomplete gamma function is called less often than the full summand function. While the runtime percentages of the summand function and the incomplete gamma function are nearly identical for $d=1$, the runtime share of the incomplete gamma function decreases substantially for larger system dimensions, as most evaluation points then lie in the outer summation shells.}

\section{Application to quantum systems}
\label{sec:application}

The Epstein zeta function forms the basis for studying long-range interacting classical and quantum systems with applications ranging from the computation of Madelung constants in theoretical chemistry \cite{schwerdtfeger100YearsLennardJones2024,burrowsMadelungConstantDimensions2022}, over unconventional superconductivity \cite{buchheit2023exact}, to high energy physics \cite{wolf1999exact,ambjornPropertiesVacuumMechanical1983}. In this section, we use our efficiently implemented Epstein zeta function to explore two relevant systems arising in condensed matter physics and quantum field theory.    

\subsection{Anomalous quantum spin-wave dispersion in 3D}
\label{sec:dispersion_relation}

In our first application, we study linear spin waves in a quantum spin lattice $\Lambda$ with power-law long-range interactions. The properties of the system are then determined by the Hamiltonian
\[
H=-\frac{J}{2}\sums_{\bm x,\bm y\in\Lambda}\frac{\bm S_{\bm x}\cdot \bm S_{\bm y}}{|\bm x-\bm y|^{\nu}},
\]
where
 \(\bm S_{\bm x}\) is the spin operater at lattice site \(\bm x\)  and \(J>0\) determines the ferromagnetic interaction between nearest neighbors.
 Following standard techniques (in particular Holstein--Primakoff transformation and limit of large spin quantum numbers \(S\gg 1\)), a plane wave Ansatz
yields the dispersion relation, that relates the temporal frequency $\omega(\bm k)$ of a plane wave in the system to its spatial period determined through the wavevector $\bm k$. For general lattices $\Lambda$ and interaction exponents $\nu$, this dispersion relation takes the form \cite[Appendix D]{buchheit2023exact}
\[
\hbar \omega(\bm k)=
JS\Big(\new{Z_{\Lambda,\nu}(\bm 0, \bm 0)}
-
\new{Z_{\Lambda,\nu}(\bm 0, \bm k)}\Big),
\]
with $\hbar$ the reduced Planck constant.

In 
Figure \ref{fig:dispersion}, we display the dispersion relation for a 3D square lattice $\Lambda=\mathds Z^3$ as a function of $k_1$ along the cut $k_2=k_3=0$ for different values of the interaction exponent $\nu$. Of central interest here is the behavior at small wavevectors, corresponding to the low-energy sector. While for $\nu = 5$ (green), the dispersion relation follows the well-known $\bm k^2$ behavior of short-range interacting systems, anomalous behavior is observed for $\nu<d+2$. In the case of $\nu = d+1$ (orange line), a linear dispersion relation is obtained, which is the three-dimensional equivalent of the result obtained in \cite{peter2012anomalous} for dipole-dipole interactions \(1/|\bm r|^3\) in two dimensions and generalises the result in one dimension from \cite{yusuf2004spin}.
For $\nu = 3.5$ (blue), a square root behavior around $\bm k = \bm 0$ is observed.

\begin{figure}
    \centering
        \includegraphics[width=0.53\linewidth]{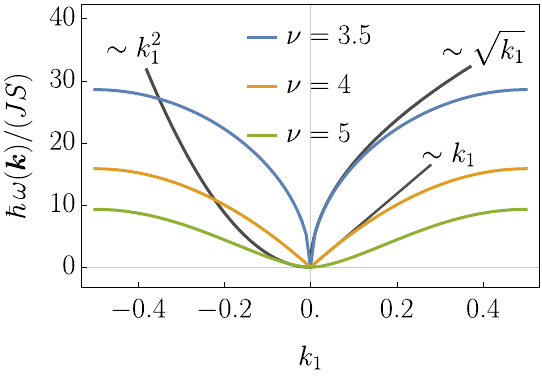}
    \caption{Quantum spin wave dispersion relation of spins on a 3D square lattice $\Lambda=\mathds Z^3$ as a function of $k_1$ for $k_2=k_3=0$.
    Typical scaling  \(\omega(\bm k)\sim \bm k^2\)
    is observed for \(\nu \ge d+2\) (green line $\nu = 5$), whereas anomalous scaling is observed for $d<\nu<d+2$  (orange line $\nu =4$, blue line $\nu=3+1/2$).}
    \label{fig:dispersion}
\end{figure}

This result can be understood from the regularised Epstein zeta function in 
Definition \ref{def:epstienreg}, which yields for the small $\vert \bm k\vert$ behavior
\[
\hbar \omega(\bm k) = JS (c_\nu \vert  \bm k\vert^{\nu-d} +\mathcal O(\bm k^2) ),
\]
with a constant $c_\nu \in \mathds R$. Hence, for \(d<\nu <d+2\), the singularity of Epstein zeta dominates the low energy behavior, leading to an anomalous scaling \(\omega(\bm k)\sim |\bm k|^{\nu-d}\). For $\nu = d$, the dispersion relation becomes unbounded from below as 
\[
\zeps{\Lambda,\nu}{\bm 0}{\bm k} \sim \log(\vert \bm k \vert^2)
\]
signaling a breakdown of the model, which is often associated with a phase transition. 

Our work enables the study of long-range dispersion relations both numerically, using our algorithm implemented in \cite{epsteinlib}, and analytically, using our analysis of the analytic properties of the Epstein zeta function and its regularisation for any lattice and any interaction exponent.

\subsection{Casimir effect}
\label{sec:casimir}

The Casimir effect states that an attractive force acts between two perfectly conducting plates due to quantum fluctuations of the electromagnetic field. Originally derived by Casimir and Polder in 1948, it can be interpreted either as a result of retarded Van-der-Waals forces \cite{Casimir:1948dh} or due to pressure associated with the spontaneous creation and annihilation of photons in vacuum \cite{casimir1948influence}. The resulting forces can dominate the physics at microscopic scales, are measurable in state-of-the-art experiments \cite{rodriguez2011casimir}, and recently have been demonstrated to be tunable from attractive to repulsive in ferrofluids \cite{zhang2024magnetic}, with various technological applications such as quantum levitation \cite{zhao2019stable}.
Our implementation of the Epstein zeta function in \cite{epsteinlib} allows for numerical investigation of the Casimir energy for a wide range of geometries \(\Lambda\) in any dimension; we present two examples.

\begin{figure}[ht] 
    \centering
        \includegraphics[width=1\linewidth]{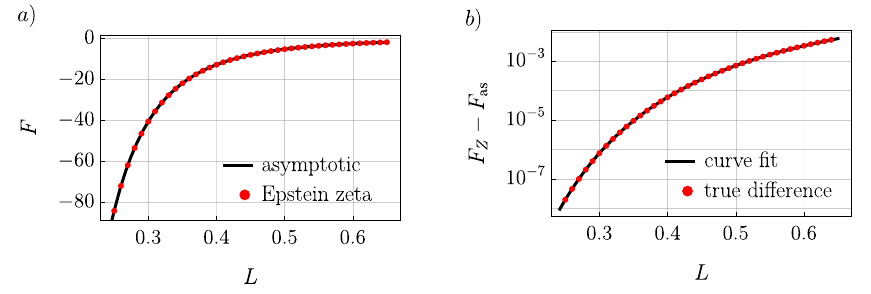}
    \caption{Comparison of the asymptotic expression of the force $F=F_{\rm as}$ of the Casimir effect and the force as the discrete derivative of the Epstein zeta function $F=F_{Z}$ as a function of 
    of the small edge
    \(L\) (a). 
    The difference between the asymptotic and the Epstein zeta function is small only for small edges,
    and the true force is bigger than the asymptotic expression for long edges (b).}
    \label{fig:vasimircutoff}
\end{figure}

\new{Consider the \(d\)-dimensional cuboid \(\Omega=\prod_{i=1}^d (0,L_i)\) with edge lengths $L_i>0$ and the associated
 lattice matrix \(A=\diag(L_1,\ldots,L_d)\).}
The Klein-Gordon wave equation
for a non-interacting scalar field with mass \(m\ge 0\), natural units $c=\hbar=1$
and periodic boundary conditions reads
\[
\begin{aligned}
&(\partial_t^2-\Delta+m^2)\phi(t,\bm x)=0, &&\qquad t\in\mathds R,\ \bm x\in \new{\Omega}\\ 
&\quad\phi(t,\bm x)=\phi(t,\bm x +\bm L_{\bm x}), &&\qquad t\in\mathds R,\ \bm x\in\partial \new{\Omega}
\end{aligned}
\]
where 
\((\bm L_{\bm x})_i=-L_i\sgn x_i\)
for \(1\le i\le d\).
A solution is given in terms of the field modes 
\[
\phi(t,\bm x)=e^{-i\lambda_{\bm k}t}
e^{\text i \bm k \cdot \bm x}
\]
with eigenvalues
\[
\lambda_{\bm k}=\sqrt{m^2+\bm k^2}
\]
where \(\bm k
=2\pi(z_1/L_1,\ldots,z_d/L_d)^T\) for \(\bm z\in\mathds Z^d\).
The total energy \(\mathcal E\) is given by 
\[
\mathcal E(\Lambda)=
\frac 12\sum_{\bm k}\lambda_{\bm k}^{-\nu}\Big|_{\nu=-1}
\]
in the sense of evaluating the meromorphic continuation in \(\nu\in\mathds C\) of the lattice sum at \(\nu = -1\), see \cite{liAttractiveRepulsiveNature1997,ederyMultidimensionalCutoffTechnique2005,ambjornPropertiesVacuumMechanical1983}. \new{We briefly explain why the meromorphic continuation yields the correct contribution to the energy density and hence to the Casimir force. Although the lattice sum is formally divergent at $\nu=-1$, the difference between the sum and the corresponding integral (representing free space without boundaries), after regularization by a $C^\infty$ cutoff function, converges to the meromorphic continuation in $\nu$ as the regularization is smoothly removed, see \cite[Thm.~5.6]{buchheit2022singular}.}
In the massless case \(m=0\) we obtain a representation in terms of the Epstein zeta function as
\[\mathcal E(\Lambda)=\pi\,\zeps{\Lambda^{\ast},-1}{\bm 0}{\bm 0}\]
where $\Lambda^*=\diag(1/L_1,\ldots,1/L_d)$.

Consider the three-dimensional box with one short edge 
\(L_1=L\ll 1\) and two edges with unit length $L_2=L_3=1$.
The Casimir effect states that there is a nonzero attractive force
\[F=-\frac{\mathrm{d}\mathcal E}{\mathrm{d} L}<0\]
between the sides of the cube with unit length.
Asymptotically, it is known
(see \cite{zeidlerQuantumFieldTheory2006}), that
\[
F\approx -\frac{\pi^2}{30 L^4}
,\qquad
L\ll 1
\]
which agrees with our numerics for small edge lengths.
We further numerically determine the difference between the force in the sense of finite difference of the Epstein zeta function $F_z$ and the asymptotic expression for the force $F_{\rm as}$ as defined above approximately equal to be
\[
F_Z-F_{\rm as}\approx -\frac{8 \pi e^{-2\pi /L}}{L^3}
\]
by a curve fit, see 
Figure \ref{fig:vasimircutoff}.

The energy of a cube with unit volume \(L_1L_2L_3=1\) is minimised by making either one or two edges small, and the highest energy state is achieved by setting $L_1=L_2=L_3$, see
Figure \ref{fig:casimirbox}.
Casimir force tends to deform cubes of fixed volume \(L_1L_2L_3=1\) into either long or thin boxes, which agrees with the analysis in \cite{ambjornPropertiesVacuumMechanical1983}.

\begin{figure}
\centering
        \includegraphics[width=.85\linewidth]{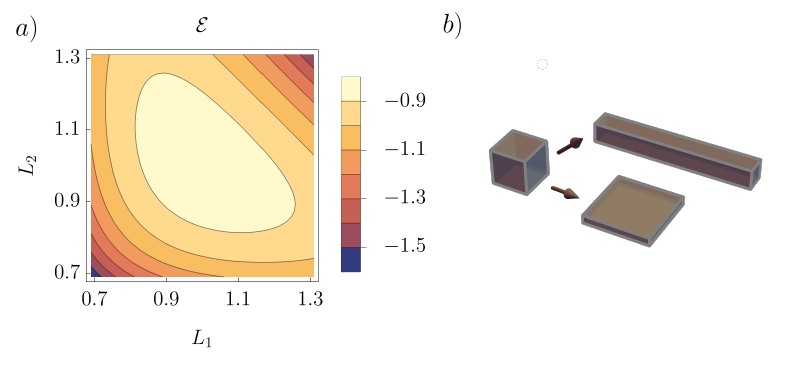}
    \caption{Total energy in a three-dimensional box with volume \(V_{\Lambda}=L_1L_2L_3=1\), as a function of two sides \(L_1\), \(L_2\), where the third side is fixed as \(L_3=1/(L_1L_2)\) (a). 
    The energy is the lowest when either two sides are small or one side is small. 
        The Casimir energy thus tends to deform cubes into either very long or very flat boxes (b).}
    \label{fig:casimirbox}
\end{figure}

In the massive case \(m>0\), the Casimir energy is no longer described by a
lattice sum as in 
Definition \ref{epsteindef}, but the derivation of Crandall's representation can still easily be adopted to yield an exponentially fast converging representation
for the total energy.
This highlights the strength of the method for the meromorphic continuation presented in this paper, which the authors will expand to even more general lattice sums in the future.

\section{Outlook and Conclusions}
\label{sec:outlook}

The Epstein zeta function is of high interest in pure and applied mathematics 
\cite{chowla1949epstein,rankinMinimumProblemEpstein1953,terras,buchheit2022singular}
with numerous applications in theoretical physics,
particularly in long-range interacting systems.
Examples include
theoretical chemistry
\cite{schwerdtfeger100YearsLennardJones2024,robles-navarroExactLatticeSummations2025},
high energy physics \cite{ambjornPropertiesVacuumMechanical1983,hawking1977zeta}
and
unconventional superconductors \cite{buchheit2023exact}.
This work
solves the long-standing issue of efficiently computing the Epstein zeta function over the whole real parameter range
and provides a comprehensive analysis of its analytic properties, making this fundamental function accessible as a powerful computational and exploratory tool.
Together with our implementation in EpsteinLib \cite{epsteinlib}, it aims to
 establish the Epstein zeta function as a standard special function.

We have derived a representation of the Epstein zeta function based on the work of Crandall, which enables accessing its meromorphic continuation and naturally yields the functional equation.
We have shown that the Epstein zeta function is jointly holomorphic in the exponent $\nu$ and in complex cones of the vector arguments $\bm x$ and $\bm y$ and have discussed its symmetries and power-law singularities.
Furthermore, we have shown that the Epstein zeta function can be decomposed into a power-law singularity
and a holomorphic function in the reciprocal elementary lattice cell, allowing for its efficient precomputation through interpolation.

Our algorithm for the efficient evaluation of the Epstein zeta function
will allow for a unifying treatment of anomalous behavior in many-body systems with long-range interactions, both classical and quantum. We have demonstrated this for quantum spin systems with long-range interactions, generalising the results from  \cite{yusuf2004spin,peter2012anomalous}. The regularised Epstein zeta function enables the efficient computation of integrals involving the Epstein zeta function, as the singularity is analytically known and can be handled via specialised techniques like Duffy transformations \cite{duffy1982quadrature}, while the regularised zeta function is 
analytic in the elementary cell of the reciprocal lattice
and can be integrated using standard Gauss quadrature, with applications to unconventional superconductivity \cite{buchheit2023exact}. 

EpsteinLib \cite{epsteinlib} is already actively being used by the quantum condensed matter community. 
The library has been applied to determine the quantum phase diagram of hardcore bosons with long-range interactions \cite{koziol2023quantum}.
It has enabled the recent study of the melting of a devil's staircase in the long-range Dicke-Ising model \cite{koziol2025melting}.
EpsteinLib has been used in the theoretical prediction and experimental verification of exotic fractional magnetisation plateaus in a Shastry-Sutherland Ising model \cite{yadav2024observation}.
Further, we have been able to reduce the scaling of computational effort for evaluating many-body lattice sums in theoretical chemistry from exponential to log-linear scaling in the number of interaction partners \cite{buchheitEpsteinZetaMethod2025}.
This method forms the foundation for an ongoing rigorous investigation of the influence of many-body interactions on the stability of \new{three-dimensional} crystal lattices in theoretical chemistry, with first results in \cite{robles-navarroExactLatticeSummations2025}. \new{Finally, we have recently applied our analysis of the complex singularity structure of generalized Epstein zeta functions to develop a new algorithm for the efficient computations of magnetic interactions between $d$-dimensional solid bodies with copies on an $n<d$-dimensional grid, thereby resolving a major open problem in the simulation of micromagnetic textures \cite{buchheitZetaExpansionLongrange2025}.}

In the future, we aim to include our  recent extension of Crandall's formula to translationally non-invariant point sets
\cite{buchheitComputationLatticeSums2024} 
in EpsteinLib, as well as other generalised zeta functions, such as the many-body zeta function introduced in \cite{buchheitEpsteinZetaMethod2025}.
We will create an algorithm for stably computing derivatives of arbitrary order of the Epstein zeta function in \(\bm x\) and $\bm y$, which correspond to generalised lattice sums \cite{buchheit2022singular} that have direct applications in the singular Euler--Maclaurin expansion \cite{buchheit2023exact}.
We plan to develop specialised algorithms for high-dimensional sums as well as expansions for lattice matrices with large condition numbers. \new{Furthermore, we aim to extend our method to support multi-shot evaluations of wavevectors and displacement vectors by exploiting redundant information, and to leverage SIMD vectorization across different architectures, such as AVX-512, to further accelerate evaluation.}
Finally,
we aim to develop and use efficient numerical methods, based on the Epstein zeta function and its generalisations,
for the precise study of exotic behavior in long-range interacting many-body systems in quantum condensed matter physics and theoretical chemistry.

\section*{Acknowledgements}

We thank Torsten Keßler, Daniel Seibel, Kirill Serkh, Gary Schmiedinghoff, and Sergej Rjasanow for insightful comments and valuable discussions that helped improve this manuscript. 
We acknowledge our DevOps engineer, Jan Schmitz, for his significant contributions to the development of the library.

\section*{Declarations}

\subsection*{Funding}
\new{This work was supported by the Klaus-Tschira Stiftung under Grant No.00.025.2025.}
J.B. acknowledges the support of the Quantum Fellowship Program of the German Aerospace Center (DLR) for funding their contribution to this work.

\subsection*{Data Availability}
The authors declare that all data supporting the findings of this work are available within this article. Our open-source library EpsteinLib is publicly available \cite{epsteinlib}.
A notebook, which reproduces all figures presented in this work, is provided in the \texttt{examples/mathematica/} folder.

\subsection*{Conflict of interest}
The authors declare that they have no conflict of interest.

\subsection*{Declaration of generative AI and AI-assisted technologies in the writing process}
During the preparation of this work, the authors used ChatGPT and Claude AI for grammar checking and minor language enhancements. After using this tool, the authors reviewed and edited the content as needed and take full responsibility for the content of the published article.


\appendix

\section{Truncation}
\label{sec:appendix-trunc}

For the investigation of the summation cutoff, it is sufficient to consider the lattice $\mathds{Z}^d$, as the following lemma shows.

\begin{lemma}
\label{lem:z-is-enough-revisited}
Let $\Lambda=A\mathds Z^d$ for $A\in\mathds R^{d\times d}$ regular.
Then, for any set $M\subset\mathds R^d\setminus\{\bm 0\}$,
and $\bm v\in\mathds R^d$, we have
\[\sum_{\bm z \in \Lambda \cap M} G_\nu(c(\bm z+\bm v)) \leq \sum_{\bm z \in \mathds{Z}^d \cap A^{-1}M} G_\nu\left(\frac{c(\bm z +A^{-1}\bm v)}{\Vert A^{-1}\Vert} \right),\]
with $\|\cdot\|$ the spectral norm.
\end{lemma}

\begin{proof}
The result follows directly from
\[|A(\bm z + A^{-1} \bm v)|= \Vert A^{-1}\Vert^{-1}\big(\Vert A^{-1}\Vert\Vert A (\bm z+A^{-1}\bm v)\Vert\big) \ge \Vert A^{-1}\Vert^{-1} |\bm z+A^{-1}\bm v|\]
as $G_\nu(\bm z)$ is rotationally symmetric and strictly decreasing in the norm of $\bm z$, see Lemma \ref{lemma:g-mon}.
\end{proof}

The following lemma allows us to bound the number of summands in shells with width $\varepsilon>0$ in the multidimensional sums in Crandall's representation.

\begin{lemma}
\label{lem:shells}
Let $d\in\mathds N_+$, let $r>0$, and let
$$
N_{d}(r)=\sup_{\bm v\in \mathds R^d}\big(\# 
\{\bm z\in\mathds Z^d: |\bm z+\bm v|\le r\}\big),
$$
where the cardinality $\#$ denotes the number of elements in a set. 
Then, for any $r>\sqrt{d}$ and $0<\varepsilon\le (r-\sqrt{d})/2$, it holds that
$$
N_d(r+\varepsilon) 
\le 
\frac{(3/2)^d\pi^{d/2}}{\Gamma(d/2+1)}r^d.
$$
\end{lemma}

\begin{proof}
Consider the disjoint union of hypercubes
$$
V=
\bigcup_{
\substack{\bm z\in\mathds Z^d+\bm v\\ |\bm z|<r+\varepsilon}}
\!\!
(\bm z+(-1/2,1/2)^d).
$$
Then $N_d(r+\varepsilon)=\Vol V$.
Since the maximum distance of any $\bm z\in\mathds R^d$ to the nearest point in $\mathds Z^d+\bm v$ is $\sqrt{d}/2$, we obtain
$
V\subset 
\overline{B}_{\tilde r}(\bm 0).
$
with $\tilde r = r+\varepsilon+\sqrt{d}/2$, from which the bound
$$
N_d(r+\varepsilon)\le \Vol  \big(\overline{B}_{\tilde r}(\bm 0)\big)= 
\pi^{d/2}{\tilde r}^d/\Gamma(d/2+1)
$$
follows.
The desired estimate is then obtained from the inequality $\varepsilon+\sqrt{d}/2\le r/2$, as then $\tilde r\le 3r/2$.
\end{proof}

For real exponents $\nu$, the properties of the upper Crandall function are a direct consequence of the integral representation.

\begin{lemma}
\label{lemma:g-mon}
Let $\nu \in \mathds{R}$, $\bm z \in \mathds{R}^d\setminus\{\bm 0\}$. Then, the upper Crandall function \(G_{\nu}(\cdot)\) is rotationally symmetric and globally decreasing in the norm of $\bm z$.
\end{lemma}

\begin{proof}
Write $G_\nu(\bm z)$ as 
\[G_{\nu}(\bm z)=\int_{-1}^1|t|^{-\nu}e^{-\pi \bm z^2/t^2}\,\frac{\rm dt}{t}\]
and note that the integrand is rotationally symmetric, positive, and strictly decreasing in \(\bm z^2\). The integral  inherits these properties.
\end{proof}

The radial symmetry of $G$ allows us to rewrite the integral in polar coordinates, reducing it to the one-dimensional case.  The following lemma enables their evaluation in terms of Crandall functions.

\begin{lemma}
\label{lem:g-int-revisited}
Let $t \in \mathds{R}$ and let $s \in \mathds{R} \setminus \{t\}$. Then, for any $r > 0$, we have
\[\int_r^\infty u^{t}G_s(u)\frac{\d u}{u} = r^{t} \frac{G_{t}(r) - G_s(r)}{t-s}.\]
\end{lemma}

\begin{proof}
After inserting the definition and substituting $\tilde u = \pi u^2$, we find
$$\int_r^\infty u^tG_s(u) \frac{\d u}{u}
=
\frac{1}{2 \sqrt{\pi}^t} \int_{\pi r^2}^\infty \tilde u^{(t-s)/2} \Gamma(s/2, \tilde u)  \frac{\d \tilde u}{\tilde u}.$$
Notice that for any $a,v\in \mathds R$, integration by parts yields
\[
v\int_{r_0}^\infty u^{v-1} \Gamma(a, u) \d u = \Gamma(a+v, r_0) - r_0^{v} \Gamma(a, r_0),\]
for any $r_0>0$,
where the boundary term vanishes due to
the superpolynomial decay of $\Gamma(a,u)$ in $u$.
We thus obtain
\[\int_r^\infty u^tG_s(u) \frac{\d u}{u} = \frac{\Gamma(t/2, \pi r^2) - (\pi r^2)^{(t-s)/2} \Gamma(s/2, \pi r^2)}{\sqrt{\pi}^t (t-s)}.\]
Rewriting in terms of the upper Crandall function completes the proof.
\end{proof}
We present the proof for the rigorous error bound in Crandall's representation. 
\begin{proof}[Proof of Theorem \ref{theorem:trunc}]
By 
Lemma \ref{lem:z-is-enough-revisited} the absolute value of the remainder is bounded by
$$
    \frac{\pi^{\nu/2}}{|\Gamma(\nu/2)|}\Bigg[\sum_{\substack{\bm z\in\mathds Z^d\\ |\bm z - A^{-1}\bm x|> r/\|A\|}}\!\!
    G_{\nu}\Big(\frac{\bm{z} -A^{-1}\bm x}{\|A^{-1}\|}\Big)
+\frac{1}{V_{\Lambda}}
\!\!\!
\sum_{\substack{\bm k\in\mathds Z^d\\ |\bm k + A^T\bm y|> r/\|A^{-1}\|}}
\!\!\!\!\!
G_{d-\nu}
\Big(\frac{\bm{k} + A^T\bm y}{\|A\|}\Big) 
\Bigg]
$$
where we enlarged the summation range using $A^{-1}(\mathds R^d\setminus B_r)\subseteq \mathds R^d\setminus B_{r/\Vert A\Vert}$ and
$A^{T}(\mathds R^d\setminus B_r)\subseteq \mathds R^d\setminus B_{r/\Vert A^{-T}\Vert}$ with $\Vert A^{-T}\Vert=\Vert A^{-1}\Vert$.

We now subdivide the sums into sums over spherical shells of width 
$\varepsilon'>0$ as follows
$$\sum_{\substack{\bm k\in \mathds Z^d\\ |\bm k + \bm v|> r'}}G_{s}
(c(\bm{k} + \bm v)) = \sum_{n = 0}^\infty \sum_{\substack{\bm k\in \mathds Z^d\\ |\bm k + \bm v|> r'+n\varepsilon'\\|\bm k + \bm v|\le  r'+(n+1)\varepsilon' }} G_{s}
(c(\bm{k} + \bm v)) 
$$
for
$r',c>0$, $\bm v\in\mathds R^d$, and $s\in \mathds C$.
We then use the strict monotonic decrease and rotational symmetry of $G_s(\cdot)$ to bound the value of $G_s$ by its maximum within the shell. We further bound the number of lattice points within the shell by the number of points within the closed ball of radius $r'+(n+1)\varepsilon'$, with the bound provided in 
Lemma \ref{lem:shells}. We obtain for 
$0<\varepsilon'\le (r'-\sqrt{d})/2$,
$$
\sum_{\substack{\bm k\in \mathds Z^d\\ |\bm k + \bm v|> r'}}G_{s}
(c(\bm{k} + \bm v))
\le
C\sum_{n=0}^{\infty}
(r'+n\varepsilon')^d
G_{s}
(c(r'+n\varepsilon')),
$$
with $C=(3/2)^d\pi^{d/2}/\Gamma(d/2+1)$.
By monotonicity,
this expression is bounded by the associated integral
$$
C\int_{-1}^{\infty}
(r'+n\varepsilon')^d
G_{s}
(c(r'+n\varepsilon'))\,\mathrm d n.
$$
Applying 
Lemma \ref{lem:g-int-revisited}, we find
$$
C
\frac 1{\varepsilon'}\Big[(r'-\varepsilon')^{d+1}
\frac{G_{d+1}(c(r'-\varepsilon')) - G_s(c(r'-\varepsilon'))}{d+1-s}
\Big]
$$
where the limit $s\to d+1$  is well-defined, since $G_{s}(u)$, $u>0$,
is holomorphic in $s\in\mathds C$ by 
Lemma \ref{lem:propCrandall}.
Letting
 $r'=r/\|A\|$, $c=1/\|A^{-1}\|$
 for the first sum, and $r'=r/\|A^{-1}\|$, $c=1/\|A\|$
 as well as $\tilde\varepsilon=c \,\varepsilon'$
 leads to the bound
$$
\kappa(A)^{d+1}C
\frac 1{\tilde\varepsilon}\Big[
\Big(\frac r{\kappa(A)}\Big)^{d+1}
\frac{G_{d+1}(r/\kappa(A)-\tilde\varepsilon) - G_s(r/\kappa(A)-\tilde\varepsilon)}{d+1-s}
\Big]
$$
where we used that $c\le 1$ since both $\|A\|\ge 1$ and $\|A^{-1}\|\ge 1$ due to $\det(A)=1$.
The bound above holds
for every
$$
0<\tilde\varepsilon\le 
(r/\kappa(A)-\sqrt{d}/\max\{\|A\|,\|A^{-1}\|\})/2.
$$
The right-hand side of the expression above is, by the constraint on $r$, larger than $\varepsilon$.
Thus, we may choose $\tilde{\varepsilon}=\varepsilon$.
Including the $\nu$-dependent prefactor yields the statement.
\end{proof}

\printbibliography

\end{document}